\newtheorem{theorem}{Theorem}[section]
\newtheorem{lemma}[theorem]{Lemma}
\newtheorem{proposition}[theorem]{Proposition}
\theoremstyle{definition}
\newtheorem{remark}[theorem]{Remark}
\newtheorem{problem}{Problem}
\newtheorem{maintheorem}{Theorem}
\begin{document}
  \title[Asymptotically stable control problems]{Asymptotically stable control problems by infinite horizon optimal control with negative discounting}

  \author[F. Nakamura]{Fumihiko Nakamura}
  \address[F. Nakamura]{Kitami Institute of Technology, Kitami, 090-8507, Japan}
  \email[F.Nakamura]{nfumihiko@mail.kitami-it.ac.jp}

    \date{}

  \subjclass[2010]{49J15, 34D05}
  \keywords{asymptotic stability, Infinite horizon, negative discount}

\begin{abstract}
In the paper, for the system which possesses both an attractor and a stable fixed point, we first formulate new stable control problems to find the asymptotically stable control function which realizes to transit a state moving around the attractor to the stable fixed point. Then by using the ordinary differential equation based on the infinite horizon optimal control model with negative discounts, we give one of answers for the stable control problem in a two-dimensional case. Furthermore, under some conditions, we verify that the phase space can be separated to some open connected components depending on the asymptotic behavior of the orbit starting from the initial point in their components. This classification of initial points suggests that it is enable to robustly achieve a stable control. Moreover, we illustrate some numerical results for the stable control obtained by applying our focused system for the Bonhoeffer-van der Pol model.
\end{abstract}

  \maketitle

\section{Problems and main theorems}

The asymptotic stability in infinite horizon optimal control problems are well discussed in previous study. For instance, the paper \cite{Brock} raised a global stable problem for modified Hamiltonian dynamical systems and gave a sufficient condition for the existence of asymptotically stable solutions. In \cite{Haurie}, an overtaking and $G$-supported properties which guarantee the stable control for a class of non-convex systems are introduced. Also \cite{Gaitsgory} provided some conditions to obtain stable control for nonlinear systems and illustrated numerical examples of its control.

The Bonhoeffer-van der Pol (BvP) model or FitzHugh-Nagumo model are well known example on the neuroscience, which have an unique stable fixed point and stable limit cycle for appropriate parameters \cite{Barnes}. In \cite{Forger, Chang}, the method how the state can transit between the stable fixed point and the limit cycle is discussed and some numerical results are displayed. Their idea is based on variational principles and they simulate by using the method called first-order gradient algorithm \cite{Bryson}. Since the transit problems in \cite{Forger, Chang} are beyond the problem supposed in \cite{Brock}, their study inspires a generalization of the problem, that is, the problem is whether it is possible to transit the stable state such as a periodic orbit to another stable state by the additive type of perturbations. From such background, in this paper, we first formulate the generalized problem as follows.

Let $F:\mathbb{R}^n\to\mathbb{R}^n$ be a smooth function and consider an autonomous ordinary differential equation $\dot{z}=F(z)$ for $z\in\mathbb{R}^n$. Let the origin $0\in\mathbb{R}^n$ be a stable fixed point of the system, and assume that there is an attractor $A$ whose basin $B(A)$ does not contain the origin.

\begin{problem}\label{prob1}
Find a control function $\chi(t)\in\mathbb{R}^n$ such that
$$
{\rm(i)\ }\displaystyle\lim_{t\to\infty}|\chi(t)|=0\quad\quad {\rm and}
\quad \quad \ {\rm(ii)\ }U(\chi)\cap A \neq\emptyset.
$$
where $\displaystyle U(\chi):=\{z_0\in\mathbb{R}^n\ | \lim_{t\to\infty}\varphi^t(z_0)=0\}$ and $\varphi^t(z_0)$ denotes the solution of $\dot{z}=F(z)+\chi$ with $z(0)=z_0$. Furthermore, find properties of $U(\chi)$. 
\end{problem}
The properties of $U(\chi)$ are, for instance, whether the Lebesgue measure of $U(\chi)$ is large enough, small or zero. If the Lebesgue measure of $U(\chi)$ is small, then the orbit may not be able to reach to zero by small perturbations. 

Now, if the control function $\chi(t)$ is determined by some differential equation $\dot{\chi}(t)=G(z,\chi)$, the problem \ref{prob1} can be reformulated as the following initial value problem.

\begin{problem}\label{prob2}
Find a smooth function $G:\mathbb{R}^{2n}\to\mathbb{R}^n$ and a set $U\subset\mathbb{R}^{2n}$ such that 
$$
{\rm(i)\ }\lim_{t\to\infty}\Phi^t(z_0,\chi_0)=0
\quad {\rm for}\quad (z_0,\chi_0)\in U,
\quad\quad {\rm and}\quad \quad
{\rm(ii)\ } \pi_1(U) \cap A \neq\emptyset.
$$
where $\Phi^t(z_0,\chi_0)$ denotes the solution of $\dot{z}=F(z)+\tilde{\chi}$ and $\dot{\chi}=G(z,\chi)$ with $z(0)=z_0$, $\chi(0)=\chi_0$, $\tilde{\chi}=(\chi_1,\cdots,\chi_k,0,\cdots,0)$ for some $k\leq n$, and $\pi_1(z,\chi)=z$ is a projection for $z$. Furthermore, find properties of $U$. 
\end{problem} 

Here, the reason why we use $\tilde{\chi}$ is because it is desired that we use less control factors from control point of view.

In this paper, we succeeded to give the partial answer to the problems in the case $n=2$. More precisely, consider the two-dimensional system $\dot{z}=F(z)$ which has only two attractors, stable fixed point 0 and stable limit cycle $\gamma^s$ such that $0\in I(\gamma^s)$ and their basins satisfy ${\rm cl}(B(\gamma^s)\cup B(0))=\mathbb{R}^2$, where $I(\gamma)$ is a set of inside points of the closed curve $\gamma$ and ${\rm cl}(X)$ implies a closure of the set $X$. In this case, there is also a unique unstable limit cycle $\gamma^u$ between 0 and $\gamma^s$. Then, we focus on the next autonomous ordinary differential equation:
\begin{eqnarray}
(\ast)\ \ \begin{cases}
\dot{z}=F(z)+\tilde{q}\\
\dot{q}=-[\rho I+D_F^T(z)]q
\end{cases}\ \ \ \text{with}\ \ \  q=(q_1,q_2),\ \tilde{q}=(q_1,0).\nonumber
\end{eqnarray}
where $\rho$ is a positive constant, $D_F(z)$ is the Jacobian matrix of $F$ at $z$, $I$ is the $n\times n$ identity matrix and $X^T$ is transpose of $X$. Assume that the solution $\Phi^t(z,q)$ of the system ($\ast$) exists for any initial point $(z,q)\in\mathbb{R}^4$ and $t\in\mathbb{R}$. Denoting $z=(x,y)$ and $F=(f,g)$, we can write it as the following differential equation:
\begin{eqnarray}
\begin{cases}
\dot{x}=f(x,y)+q_1\\
\dot{y}=g(x,y)\\
\dot{q_1}=-(\rho+f_x(x,y))q_1-g_x(x,y)q_2\\
\dot{q_2}=-f_y(x,y)q_1-(\rho+g_y(x,y))q_2
\end{cases}\nonumber
\end{eqnarray}
The next theorem shows that, by using the system ($\ast$), we can obtain the control function $\chi=(q_1,0)$ which satisfies (i) and (ii) in Problem \ref{prob1} under the assumption (A1) introduced later.
\begin{maintheorem}\label{maintheorem}
For the system ($\ast$) with the assumptions (A1), there exists a set $U\subset\mathbb{R}^4$ such that 
$$
{\rm(i)\ }\lim_{t\to\infty}\Phi^t(z_0,q_0)=0
\quad {\rm for}\quad (z_0,q_0)\in U,
\quad\quad {\rm and}\quad \quad
{\rm(ii)\ } \pi_1(U) \cap \gamma^s\neq\emptyset.
$$
\end{maintheorem}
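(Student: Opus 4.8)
My plan is to realize the set $U$ as the basin of attraction of the equilibrium $(0,0)\in\mathbb{R}^4$ of the system ($\ast$), and then to show that this basin, although organized around the origin, is large enough that its $z$-projection reaches the outer cycle $\gamma^s$. First I would verify that $(z,q)=(0,0)$ is an equilibrium: since $F(0)=0$ and the $q$-equation is linear and homogeneous in $q$, the point is fixed. Linearizing ($\ast$) at $(0,0)$ produces the block upper-triangular matrix
\[
\begin{pmatrix} D_F(0) & B \\ 0 & -(\rho I + D_F^T(0)) \end{pmatrix},\qquad B=\begin{pmatrix}1&0\\0&0\end{pmatrix},
\]
whose spectrum is the union of the spectrum of $J:=D_F(0)$ and of $-(\rho I+J^T)$. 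The eigenvalues of $J$ have negative real part because $0$ is a stable fixed point of $\dot z=F(z)$, while each eigenvalue of $-(\rho I+J^T)$ equals $-\rho-\lambda$ for an eigenvalue $\lambda$ of $J$, hence also has negative real part once $\rho>\max_i|\mathrm{Re}\,\lambda_i|$. I expect this spectral inequality to be exactly (part of) the hypothesis (A1), so that $(0,0)$ is asymptotically stable and its basin $U$ is open; this already yields (i) and the openness exploited below.

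Next I would record two structural facts. The hyperplane $\{q=0\}$ is invariant and ($\ast$) restricts there to $\dot z=F(z)$, so $B(0)\times\{0\}=I(\gamma^u)\times\{0\}\subseteq U$ and hence $I(\gamma^u)\subseteq\pi_1(U)$. Moreover, differentiating $|q|^2$ along ($\ast$) gives $\tfrac{d}{dt}|q|^2\le 2(\|D_F(z)\|-\rho)|q|^2$, so on any region where $\rho$ dominates $\|D_F\|$ the costate fibre contracts and $q(t)\to 0$; I would again invoke (A1) to secure this on a bounded forward-invariant set $K$ containing $\overline{I(\gamma^s)}$. Consequently, to place a point of $\gamma^s$ into $\pi_1(U)$ it suffices to steer the $z$-component across the barrier $\gamma^u$ into $I(\gamma^u)$: once $z$ lies in $I(\gamma^u)$ with $|q|$ small, openness of $U$ together with $I(\gamma^u)\times\{0\}\subseteq U$ forces $(z,q)\in U$, and the orbit then converges to $(0,0)$.

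The heart of the argument, and the step I expect to be the main obstacle, is this barrier-crossing. Because only the $x$-channel is actuated through $\tilde q=(q_1,0)$, and $q_1$ is not free but generated by its own linear equation, I would use the freedom built into the statement — only \emph{one} point of $\gamma^s$ is required — by choosing $z^\ast\in\gamma^s$ at an extreme point in the $x$-direction, where the inward normal to $\gamma^u$ has a nonzero horizontal component. Attaching a costate $q_0=(s,0)$ with $s$ large of the appropriate sign, I would derive an a priori estimate showing that while $q_1(t)$ remains large, the drift $\dot x=f(x,y)+q_1$ carries $z$ monotonically across $\gamma^u$ into $I(\gamma^u)$, whereas the contraction above keeps the orbit in $K$ and drives $q$ to a small value by the crossing time. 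Equivalently, one can run a shooting/continuity argument: for $s=0$ the orbit stays near $\gamma^s$ (so $q_0$ is not in the good slice), for $s$ large it enters $I(\gamma^u)$ (so $q_0\in U$), and openness then makes the good slice of costates nonempty. Establishing the quantitative estimate that the single actuated channel really pushes $z$ \emph{through} $\gamma^u$ — rather than sliding along it or overshooting past $\gamma^s$ — while the costate decays is the delicate point; everything else reduces to the spectral computation and the openness of the basin.
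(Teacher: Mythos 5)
Your setup --- taking $U$ to be the basin of attraction of $(0,0)$, checking asymptotic stability through the block upper-triangular Jacobian with spectrum $\{\lambda_1,\lambda_2,-\rho-\lambda_1,-\rho-\lambda_2\}$, and noting that the invariant plane $\{q=0\}$ gives $I(\gamma^u)\times\{0\}\subset U$ --- matches the paper's preliminary lemma and is fine. The genuine gap is the step you yourself flag as delicate: the forward-time barrier crossing. Hypothesis (A1) only gives contraction of $q$ while $z$ stays in $D_\rho$, but attaching a large costate $q_0=(s,0)$ generically drives $z(t)$ \emph{out} of $I(\gamma^s)\subset D_\rho$, and outside $D_\rho$ the equation $\dot q=-[\rho I+D_F^T(z)]q$ need not be contracting; so there is no a priori reason that $|q|$ has decayed by the time $z$ reaches $\gamma^u$, and if $q$ is not small there, $z\in I(\gamma^u)$ does not place $(z,q)$ in $U$ (openness of $U$ near $I(\gamma^u)\times\{0\}$ only covers small $|q|$). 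The shooting/continuity variant has the same hole: neither endpoint of your homotopy in $s$ is actually verified to lie in $U$, so the "good slice" could a priori be empty. As written, (ii) is announced rather than proved.

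The paper closes exactly this gap by running time backwards instead of forwards: it takes a small ball $B_\varepsilon(0,0)$ inside the basin and shows $\pi_1(\Phi^{-t}(B_\varepsilon(0,0)))$ must meet $\gamma^s$. If not, the backward orbit's $z$-projection would remain in $I(\gamma^s)\subset D_\rho$, where $[\rho I+D_F(z)]$ is positive definite, forcing $|q(-t)|\to\infty$; a case analysis (first showing $q_2$ alone cannot carry the blow-up while $q_1$ stays bounded, then showing an unbounded $q_1$ would have to oscillate with unbounded frequency, contradicting the boundedness of $x(-t)$, $f$, and the eigenvalues of $\rho I+D_F(z)$) yields a contradiction. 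That backward escape argument is what replaces your forward steering estimate. To salvage your route you would need a quantitative crossing-with-decay estimate for the actuated channel, which is essentially as hard as the theorem itself; reversing time as the paper does avoids ever having to control the orbit of a large-costate initial condition forward in time.
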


Note that we can immediately find $\Phi^t(z_0,q_0)$ decays exponentially by adjusting $\rho$ satisfying (A1).
In addition to (A1), if the conditions (A2)-(A5) and (AA) are assumed, we can show the next theorem which tells us that the set $U$ of initial points exists as an open connected subset in $\mathbb{R}^{4}$.

\begin{maintheorem}\label{maintheorem2}
For the system ($\ast$) with the assumptions (A1)-(A5) and (AA), there exist four disjoint open connected subset $A_1,A_2,A_3,A_4\subset\mathbb{R}^4$ such that  
${\rm cl}(A_1\cup A_2\cup A_3\cup A_4)=\mathbb{R}^4$ and
\begin{eqnarray}
\lim_{t\to\infty}\Phi^t(z_0,q_0)=
\begin{cases}
(0,0) & \text{for } (z_0,q_0)\in A_2,\\
\gamma^s\times\{0\} & \text{for } (z_0,q_0)\in A_3,\\
\infty & \text{for } (z_0,q_0)\in A_1\cup A_4.
\end{cases}\nonumber
\end{eqnarray}
\end{maintheorem}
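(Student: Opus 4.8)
The plan is to reduce the four–dimensional asymptotic analysis to the two–dimensional unperturbed dynamics by first controlling the costate $q$, and then to read off the classification from the invariant sets carried by the slice $\{q=0\}$. The crucial preliminary step, which I expect assumption (A1) to supply, is that $q(t)\to 0$ exponentially along every orbit that stays bounded. To see this I would differentiate the candidate $|q|^2$ along $(\ast)$, obtaining $\tfrac{d}{dt}|q|^2=-2\rho|q|^2-2\,q^{T}\mathrm{sym}(D_F(z))\,q=-2\,q^{T}[\rho I+\mathrm{sym}(D_F(z))]q$, where $\mathrm{sym}(M)=\tfrac12(M+M^{T})$. On a bounded orbit $D_F(z(t))$ is bounded, so choosing $\rho$ large (this is where (A1) enters) makes $\rho I+\mathrm{sym}(D_F(z))$ positive definite and forces $|q(t)|\le Ce^{-ct}$. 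Hence $\tilde q(t)\to 0$ exponentially and $\dot z=F(z)+\tilde q$ becomes an exponentially small perturbation of $\dot z=F(z)$; by robustness of the hyperbolic attractors the $z$–component of any bounded orbit converges either to $0$ or to $\gamma^{s}$.

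Next I would establish the hyperbolic structure in $\mathbb{R}^4$ of the three invariant objects lying in $\{q=0\}$: the fixed point $0$, the stable cycle $\gamma^{s}\times\{0\}$, and the unstable cycle $\gamma^{u}\times\{0\}$. Since the linearization (and the monodromy along the cycles) of $(\ast)$ on $\{q=0\}$ is block–triangular, its spectrum splits into the $z$–block (the spectrum of $D_F$, resp.\ the Floquet multipliers of $\gamma^{s},\gamma^{u}$) and the $q$–block governed by $-[\rho I+D_F^{T}]$. For $\rho$ as in (A1) the $q$–block is uniformly contracting, so $0$ is a sink with an open basin, $\gamma^{s}\times\{0\}$ is an asymptotically stable periodic orbit with an open basin, and $\gamma^{u}\times\{0\}$ is of saddle type whose only expanding direction is the radial one inherited from $\gamma^{u}$; its stable manifold $W^{s}(\gamma^{u}\times\{0\})$ is therefore three–dimensional (codimension one). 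I would identify $A_2$ and $A_3$ with the basins of $0$ and of $\gamma^{s}\times\{0\}$, which are open, and argue from the structural hypotheses that they are connected, their common boundary being the separatrix $W^{s}(\gamma^{u}\times\{0\})$.

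To complete the decomposition I would treat the orbits that are \emph{not} bounded. Here the Lyapunov estimate no longer applies, since $D_F(z)$ may grow, so I would use (A2)–(A5) and (AA) to set up a trapping/escaping dichotomy — typically by producing a coordinate or comparison function along which orbits outside a large region increase monotonically, yielding $\lim_{t\to\infty}\Phi^{t}=\infty$ and openness of the escaping set. The escaping set is the complement of $\mathrm{cl}(A_2\cup A_3)$, and the claim is that it splits into exactly two connected components $A_1$ and $A_4$; since the perturbation $\tilde q=(q_1,0)$ can only drive the $x$–coordinate toward $+\infty$ or $-\infty$, the assumptions should force precisely these two escape modes. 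Openness and connectedness of each $A_i$ then follow from continuous dependence together with the monotonicity, disjointness is automatic from the distinct limits, and $\mathrm{cl}(A_1\cup A_2\cup A_3\cup A_4)=\mathbb{R}^4$ follows once the separatrices $W^{s}(\gamma^{u}\times\{0\})$ and the two escape boundaries are shown to have empty interior.

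The main obstacle is the unbounded regime. Unlike the bounded case, it cannot be reduced to the robust two–dimensional picture, because the costate equation may amplify $q$ where $D_F(z)$ is large; it is exactly here that (A2)–(A5) and (AA) must do the work of producing escape to infinity and a global Lyapunov/comparison structure. Above all, proving that the escaping set disconnects into \emph{precisely} two open components, so that one obtains the asserted four regions and not more, is the delicate topological point, and I expect it to be the hardest part of the argument.
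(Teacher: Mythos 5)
Your proposal diverges from the paper's argument at the decisive point and leaves the hardest step unproved. The paper obtains the four components by exhibiting \emph{three} codimension-one separatrices: the stable manifold $W^{s}(\gamma^{u}\times\{0\})$ (which you do identify) \emph{and} the stable manifolds $W^{s}(\tilde z_{1},\tilde q_{1})$, $W^{s}(\tilde z_{2},\tilde q_{2})$ of the two nontrivial fixed points of $(\ast)$ with $q\neq 0$, i.e.\ the solutions of $\det[\rho I+D_F(z)]=0$, $[\rho I+D_F(z)]q=0$, $F(z)+\tilde q=0$. Assumptions (A2) and (A3) exist precisely to guarantee that there are exactly two such points and that each has a three-dimensional stable manifold; your proposal never mentions these fixed points, so (A2)--(A3) play no role in your argument. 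These two extra separatrices are what cut off the two escaping regions $A_{1}$ and $A_{4}$ and what make $A_{2}\cup A_{3}$ bounded in $q$ (it lies between them), which in turn is how the paper rules out escape to infinity inside $A_{2}$ and $A_{3}$. In your version, the statement that the escaping set ``splits into exactly two connected components'' is exactly the content you would need these manifolds for, and you explicitly defer it as ``the delicate topological point''; that is the gap. The paper's actual mechanism for the separation is Lemma 3.5 (every orbit off the exceptional set satisfies $|z(t)|\to\infty$, $|q(t)|\to 0$ as $t\to-\infty$, using (A4), (A5), (AA)) combined with the separation theorem of Hirsch applied to each boundaryless codimension-one stable manifold.

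A secondary overreach: you claim that (A1) forces $q(t)\to 0$ exponentially along \emph{every} bounded orbit. (A1) only asserts that $-[\rho I+D_F(z)]$ is negative definite on $I(\gamma^{s})$, not on every bounded set; indeed the paper notes that for the BvP model the corresponding positivity fails everywhere outside $D_\rho$ (this is why (A5) cannot be satisfied there). A bounded orbit that spends time outside $D_\rho$ can have $|q|$ growing, so the reduction of all bounded dynamics to the planar system $\dot z=F(z)$ is not justified as stated. The conclusions you want for $A_{2}$ and $A_{3}$ are obtained in the paper not by this Lyapunov estimate alone but by the combination of the $q$-boundedness of $A_{2}\cup A_{3}$, the absence of nontrivial attractors (no non-wandering points in $K\setminus D_\rho\times\mathbb{R}^{2}$ by (AA), monotonicity of $|q|$ in $D_\rho$ and in $K^{c}$), and (A4) to exclude divergence.
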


The optimal solution for the control is generally calculated by using the variational principle which is a general method to find functions which extremize the value of some giving functional. One of the most famous theorem in such optimal problems is Pontryagin's Maximum Principle \cite{Aseev2,Tauchnitz} which tell us necessary or sufficient conditions for optimal solutions. Moreover, the principle for infinite horizon are well-discussed in \cite{Haurie, Oliveira}.  

It is known that generally the equations derived from variational principles are act on a space with double dimensions, because it consists of original state variables and control variables. However, the original stable fixed point becomes saddle in the space for the additive type of perturbations. This implies that, in the case that the target state is the stable fixed point, the control variables may diverge even if the state can reach to a target point. Then, our problem is to find the optimal solution which realizes that not only the state can move to the target state but also the control variables converges to zero. 

To solve this issue, we consider the equations ($\ast$) from the variational principle for the functional having discount effects. There are many previous study for the model with positive discounting, for example \cite{Kamien, Carlson, Aseev1}. In the papers \cite{Brock,Haurie,Gaitsgory}, they also consider the positive discounting. On the other hand, there is less study focused on a negative discount. The book \cite{Kamien} says that the fixed point can be stable if the discount rate is negative for linear differential equations. Although the book \cite{Carlson} does not treat the negative discount case, they say that the negative discount rate gives the most importance to what happens in the distant future and this should amplified the stabilizing force. Thus, the reason why we focus on the system ($\ast$) is the fact that we expect one of answer for the Problem \ref{prob1} and \ref{prob2} can be achieved by using the system associated with the infinite horizon optimal control problem with ``negative'' discounting. We summarize the process to obtain the system ($\ast$) in Appendix \ref{appA}.

We consider the two-dimensional systems in this paper in order to apply the concrete example (BvP model). Indeed, we apply our theorem and display our numerical results in section \ref{application}. Moreover, for one-dimensional case, we can derive the same results under less conditions. Although the one-dimensional case is relatively simple, we summarize it in Appendix \ref{appB} since the argument helps us to understand the two-dimensional model.

\subsection{Notations and assumptions}

Before we mention our assumptions, we give some notations. First the Jacobian matrix of the system ($\ast$) can be calculate by
$$
J_*(z,q)=\begin{pmatrix}
f_x(z) & f_y(z) & 1 & 0 \\
g_x(z) & g_y(z) & 0 & 0 \\
-h_1(z,q) & -h_2(z,q) & -(\rho+f_x(z)) & -g_x(z) \\
-h_3(z,q) & -h_4(z,q) & -f_y(z) & -(\rho+g_y(z)) 
\end{pmatrix}
$$
where
\begin{eqnarray}
\begin{pmatrix}
h_1(z,q) & h_2(z,q)  \\
h_3(z,q) & h_4(z,q)  
\end{pmatrix}
=
\begin{pmatrix}
f_{xx}(x,y)q_1+g_{xx}(x,y)q_2 & f_{xy}(x,y)q_1+g_{xy}(x,y)q_2 \\
f_{yx}(x,y)q_1+g_{yx}(x,y)q_2 & f_{yy}(x,y)q_1+g_{yy}(x,y)q_2
\end{pmatrix}.\label{hessian}
\end{eqnarray}
Here $f_x$ or $f_{xy}$ denote the partial derivative. It is obvious that the divergence of the system ($\ast$) satisfies
${\rm div}_*=-2\rho$, which implies volume contracting as $t\to\infty$.

Moreover, we prepare the following stable and unstable sets for the fixed points $(z_*,q_*)$ and periodic orbit $\gamma$ of the system ($\ast$) as follows:
\begin{eqnarray}
W^s(z_*,q_*)&:=&\{(z,q)\in\mathbb{R}^4\ |\ \Phi^t(z,q)\to (z_*,q_*) \text{ as } t\to\infty\}\nonumber\\
W^u(z_*,q_*)&:=&\{(z,q)\in\mathbb{R}^4\ |\ \Phi^t(z,q)\to (z_*,q_*) \text{ as } t\to -\infty\}\nonumber\\
W^s(\gamma)&:=&\{(z,q)\in\mathbb{R}^4\ |\ d(\Phi^t(z,q),\gamma) \to 0 \text{ as } t\to\infty\}\nonumber\\
W^u(\gamma)&:=&\{(z,q)\in\mathbb{R}^4\ |\ d(\Phi^t(z,q),\gamma) \to 0 \text{ as } t\to -\infty\}\nonumber
\end{eqnarray}
where 
$$
d((z,q),\gamma):=\inf_{w\in\gamma}|(z,q)-(w,0)|.
$$
and $|\cdot|$ denotes the usual Euclidean norm.

Next, we define the set $D_\rho\in\mathbb{R}^2$ as 
$$
D_\rho:=\{z\in\mathbb{R}^2\ |\ \text{the matrix } -[\rho I+D_F(z)] \text{ is negative definite}\},
$$
where a $n\times n$ matrix $A$ is positive (negative) definite if the symmetric matrix $(A+A^T)/2$ is positive (negative) definite, that is, $x^T\frac{A+A^T}{2} x > 0$ ($<0$) holds for any vector $x\in\mathbb{R}^n\backslash \{0\}$. It is well-known that a symmetric matrix $A$ is positive (negative) definite if and only if all eigenvalues for $A$ are positive (negative). 

Now, the assumptions (A1)-(A5) are stated as follows:

\begin{itemize}
\item[(A1)] $I(\gamma^s)\subset D_\rho$.
\item[(A2)] $\#\{z\in\mathbb{R}^2\ |\ det[\rho I+D_F(z)]=0, g(z)=0\}=2$.
\item[(A3)] For any nontrivial fixed point $(\tilde{z},\tilde{q})$ for system ($\ast$) satisfying the equations 
\begin{eqnarray}\label{ntfp}
det[\rho I+D_F(z)]=0,\ \  [\rho I+D_F(z)]q=0\ (q\neq 0),\ \  F(z)+\tilde{q}=0,
\end{eqnarray}
the following inequality holds:
\begin{eqnarray}
&&\begin{pmatrix}
g_x(\tilde{z}) & g_y(\tilde{z})
\end{pmatrix}
\begin{pmatrix}
h_4(\tilde{z},\tilde{q}) & -h_2(\tilde{z},\tilde{q}) \\
-h_3(\tilde{z},\tilde{q}) & h_1(\tilde{z},\tilde{q})
\end{pmatrix}
\begin{pmatrix}
g_x(\tilde{z})\\ 
g_y(\tilde{z})
\end{pmatrix}\nonumber\\
&&\hspace{5cm}+\rho(h_1(\tilde{z},\tilde{q})g_y(\tilde{z})-h_2(\tilde{z},\tilde{q})g_x(\tilde{z}))>0\nonumber
\end{eqnarray}
\item[(A4)] There exist a set $K\subset\mathbb{R}^2$ and a function $V:\mathbb{R}^2\to\mathbb{R}$ such that
$$\text{
 (i) $V(z)>0$ in $\mathbb{R}^2\backslash K$, \quad
 (ii) $\dot{V}(z(t))<0$ in $\mathbb{R}^2\backslash K$,\quad
 (iii) $V(z)\to\infty$ if $|z|\to\infty$.
}$$
\item[(A5)] The matrix $-[\rho I+D_F(z)]$ is positive definite for $z\in\mathbb{R}^2\backslash K$ where the set $K$ is of (A4).
\end{itemize}
Fortunately, Theorem \ref{maintheorem} can be hold under only the assumption (A1) for our system ($\ast$). 
(A2) means that there are two nontrivial fixed point of the system ($\ast$) which satisfy the equation \eqref{ntfp}. Although this condition seems to be a strong condition, we can calculate only two fixed points for BvP model in the section \ref{application} by choosing appropriate $\rho$. (A3) plays an important role in order that the stable set $W^s(\tilde{z},\tilde{q})$ becomes a three-dimensional stable manifold in $\mathbb{R}^4$. The Lyapunov function-like assumption (A4) might be a natural condition since we consider that the original system $\dot{z}=F(z)$ has only one fixed point and stable limit cycles as its attractors. (A5) is also important for the separation of $\mathbb{R}^4$ in Theorem \ref{maintheorem2}.

Finally, one of difficulties of the arguments is that we do not know the existence of non-trivial closed orbit for our four-dimensional differential equation. The famous Poincare-Bendixson Theorem cannot be applied to more than three-dimensional system generally. Although some of previous study said about the non-existence of closed orbits, for instance \cite{Smith} showed it by the condition of sum of the first and second eigenvalues, it is difficult to apply their results to our system. Thus, to prove Theorem \ref{maintheorem2}, assume the following condition:
\begin{itemize}
\item[(AA)] The system ($\ast$) has no non-wandering point in $K\backslash D_\rho\times\mathbb{R}^2$, where $(z,q)\in\mathbb{R}^4$ is said to be non-wandering point if, for any open set $U\subset\mathbb{R}^4$ containing $(z,q)$ and any $T>0$, there exists $t>T$ such that $\Phi^t(U)\cap U\neq\emptyset$.
\end{itemize}

\section{Proof of Theorem A}

To prove the Theorem \ref{maintheorem}, we prepare the next proposition and the lemma.
\begin{proposition}\label{negativedef}
For a linear system $\dot{q}(t)=A(x)q(t)$ with $q(0)=q_0$ where $A(x)$ is a $n\times n$ matrix for $x(t)$ which is a continuous map in some compact subset $K\subset\mathbb{R}^n$ and $q(t)\in\mathbb{R}^n$ is a vector for $t\in\mathbb{R}$. If $A(x)$ is negative definite for any $x \in K$, then $|q(t)|$ converges to 0 as $t\to\infty$. 
\end{proposition}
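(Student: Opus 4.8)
The plan is to run a standard Lyapunov argument using the quadratic energy $V(q)=|q|^2=q^Tq$, and to convert the pointwise negative definiteness of $A(x)$ into a \emph{uniform} exponential decay rate by exploiting the compactness of $K$. First I would differentiate $V$ along a solution: since $\dot q = A(x(t))q$,
$$
\dot V(q(t)) = \dot q^T q + q^T \dot q = q^T\bigl(A(x(t))^T + A(x(t))\bigr)q = 2\,q^T \frac{A(x(t))+A(x(t))^T}{2}\,q .
$$
Writing $M(x):=\tfrac12\bigl(A(x)+A(x)^T\bigr)$ for the symmetric part and $\lambda_{\max}(x)$ for its largest eigenvalue, the Rayleigh quotient bound gives $q^T M(x)\,q \le \lambda_{\max}(x)\,|q|^2$ for every $q$. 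By the definition of negative definiteness recalled in the excerpt, $M(x)$ is negative definite on $K$, so all of its eigenvalues, in particular $\lambda_{\max}(x)$, are strictly negative for each $x\in K$.

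The crux is to upgrade this pointwise negativity to a uniform strict bound. Assuming $A$ depends continuously on $x$, the map $x\mapsto M(x)$ is continuous, and since eigenvalues depend continuously on the matrix, $x\mapsto\lambda_{\max}(x)$ is continuous on the compact set $K$ and therefore attains its maximum. Thus
$$
\mu := -\max_{x\in K}\lambda_{\max}(x) > 0 .
$$
Because the trajectory $x(t)$ remains in $K$ by hypothesis, combining the two preceding displays yields the differential inequality $\dot V(q(t)) \le -2\mu\,V(q(t))$ for all $t$.

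Finally, Grönwall's inequality (or direct integration of the scalar inequality) gives $V(q(t)) \le V(q_0)\,e^{-2\mu t}$, that is $|q(t)| \le |q_0|\,e^{-\mu t}$, which tends to $0$ as $t\to\infty$; if $q_0=0$ then uniqueness of solutions to the linear system forces $q\equiv 0$ and the claim is trivial. Note that this argument in fact produces exponential convergence, which is the remark the excerpt makes about adjusting $\rho$ under (A1).

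The step I expect to be the main obstacle is the uniformization: one must ensure that $\max_{x\in K}\lambda_{\max}(x)$ is genuinely negative rather than merely nonpositive, and this is precisely where compactness of $K$ (together with continuity of $A$, hence of the eigenvalue) is indispensable. Without compactness the largest eigenvalue could creep toward $0$ along the trajectory, which would break the uniform rate and could leave $|q(t)|$ bounded away from $0$. A secondary point worth verifying explicitly is the implicit standing hypothesis that $A$ is continuous on $K$ and that the curve $x(t)$ indeed stays inside $K$, since both are needed for the Rayleigh bound to hold uniformly along the whole orbit.
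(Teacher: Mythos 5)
Your proof is correct and follows essentially the same route as the paper: differentiate $V(q)=|q|^2$ along the flow and use negative definiteness of the symmetric part of $A(x)$. In fact your uniformization step --- extracting $\mu=-\max_{x\in K}\lambda_{\max}(x)>0$ from compactness of $K$ and continuity of $A$ --- supplies precisely the detail the paper's own proof leaves implicit, since monotone decrease of $|q(t)|^2$ with a merely pointwise negative derivative does not by itself force the limit to be zero.
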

\begin{proof}
Since $A(x)$ is negative definite, we can calculate as follows:
\begin{eqnarray}
\frac{d}{dt}\lvert q(t)\rvert^2&=&q^T(t)\dot{q}(t)+\dot{q}^T(t)q(t)\nonumber\\
&=& q^T(t)A(x)q(t)+q^T(t)A^T(x)q(t)\nonumber\\
&=& q^T(t)(A(x)+A^T(x))q(t)<0 \ \text{for any $t>0$}.\nonumber
\end{eqnarray}
Therefore, $\lvert q(t)\rvert^2$ is monotonically decreasing as $t\to\infty$, and $\lvert q(t)\rvert$ must converge to 0 as $t\to\infty$ since $\frac{d}{dt}\lvert q(t)\rvert^2<0$ for any $q(t)\neq 0$. 
\end{proof}

\begin{lemma}
The point $(0,0)\in \mathbb{R}^4$ becomes a stable fixed point of the system ($\ast$). Moreover, the set $\gamma^s\times\{0\}$ and $\gamma^u\times\{0\}$ become a stable and unstable limit cycle in $\mathbb{R}^4$, respectively.   
\end{lemma}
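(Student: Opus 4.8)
The plan is to exploit that the slice $\{q=0\}=\mathbb{R}^2\times\{0\}$ is invariant under ($\ast$) and carries exactly the unperturbed dynamics, and then to read off local stability of each object from a linearization that is block triangular along this slice. First I would record the invariance: the $q$-equation $\dot q=-[\rho I+D_F^T(z)]q$ is linear and homogeneous in $q$, so $q\equiv 0$ is a solution and $q(0)=0$ forces $q(t)\equiv 0$, whence $\tilde q\equiv 0$ and the system collapses to $\dot z=F(z)$ on $\{q=0\}$. Thus $F(0)=0$ makes $(0,0)$ a fixed point of ($\ast$), and $\gamma^s\times\{0\}$, $\gamma^u\times\{0\}$ are periodic orbits of ($\ast$) with the same periods as the cycles of $\dot z=F(z)$. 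The structural fact I rely on is that when $q=0$ the Hessian entries $h_1,\dots,h_4$ of \eqref{hessian} all vanish, so the lower-left block of $J_*(z,0)$ is zero and, written in $2\times2$ blocks,
\begin{equation*}
J_*(z,0)=\begin{pmatrix} D_F(z) & B \\ 0 & -[\rho I+D_F^T(z)] \end{pmatrix},\qquad B=\begin{pmatrix}1&0\\0&0\end{pmatrix}.
\end{equation*}
Hence along the slice the $z$- and $q$-variations decouple at the level of spectra: the eigenvalues (resp.\ Floquet multipliers) of the full linearization are the union of those of $D_F$ and those of $-[\rho I+D_F^T]$.

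For the fixed point I would split $\sigma(J_*(0,0))=\sigma(D_F(0))\cup\sigma(-[\rho I+D_F^T(0)])$. The first set lies in the open left half-plane because the origin is a stable fixed point of $\dot z=F(z)$. For the second, $0\in I(\gamma^s)\subset D_\rho$ by (A1), so $-[\rho I+D_F(0)]$ is negative definite; since a real matrix and its transpose share the same symmetric part, $M:=-[\rho I+D_F^T(0)]$ also has negative-definite symmetric part, and every eigenvalue $\mu$ with eigenvector $v$ then satisfies $\mathrm{Re}\,\mu\,|v|^2=v^*\tfrac{M+M^T}{2}v<0$. All four eigenvalues of $J_*(0,0)$ therefore have strictly negative real part, giving asymptotic stability of $(0,0)$.

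For the cycles I would pass to Floquet theory along $(\gamma(t),0)$. Because $J_*(\gamma(t),0)$ is block upper triangular, so are its fundamental solution and monodromy matrix, whose multipliers are the union of those of $\dot\eta=D_F(\gamma(t))\eta$ and of $\dot\zeta=-[\rho I+D_F^T(\gamma(t))]\zeta$. The first block contributes the trivial multiplier $1$ together with the planar nontrivial multiplier, which has modulus $<1$ for the stable cycle $\gamma^s$ and $>1$ for the unstable cycle $\gamma^u$. For the second block I would reuse the quadratic-form estimate of Proposition \ref{negativedef}: where $-[\rho I+D_F^T]$ is negative definite along the orbit, $|\zeta(t)|$ strictly decreases, so the block monodromy has operator norm $<1$ on the unit sphere and all its multipliers lie strictly inside the unit circle. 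Since $\gamma^u\subset I(\gamma^s)\subset D_\rho$, the $q$-block contracts along $\gamma^u$, but the expanding planar multiplier makes $\gamma^u\times\{0\}$ an unstable (repelling-in-$z$) cycle; for $\gamma^s$ the planar multiplier already contracts, and combined with $q$-contraction this yields that $\gamma^s\times\{0\}$ is asymptotically stable.

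The main obstacle is the $q$-direction along $\gamma^s$ itself: (A1) asserts negative definiteness only on the open interior $I(\gamma^s)$, whereas $\gamma^s=\partial I(\gamma^s)$, so continuity a priori yields only negative semidefiniteness on the curve. I would close this gap with a refined monotonicity argument: semidefiniteness gives $\tfrac{d}{dt}|\zeta|^2\le 0$ for all $t$, and it suffices that $-[\rho I+D_F^T]$ be strictly negative definite at a single phase $t_0\in[0,T)$ to force $|\zeta(T)|<|\zeta(0)|$ for $\zeta(0)\neq0$, hence modulus-$<1$ multipliers after taking the maximum of $|\zeta(T)|/|\zeta(0)|$ over the compact unit sphere. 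Extracting that pointwise strictness from the geometry of $\gamma^s$ bounding $D_\rho$, and converting the pointwise decay into the multiplier-modulus statement, is the remaining technical care.
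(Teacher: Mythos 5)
Your proposal is correct, and for the fixed point it coincides with the paper's argument: both read off the spectrum of the block-triangular Jacobian on the invariant slice $\{q=0\}$ as $\sigma(D_F(0))\cup\sigma(-[\rho I+D_F^T(0)])$, and use (A1) together with the stability of the origin for $\dot z=F(z)$ to place all four eigenvalues in the left half-plane. For the limit cycles you do genuinely more than the paper: the paper only invokes Proposition \ref{negativedef} to conclude $q(t)\to 0$ near the cycles and then asserts that $\gamma^s\times\{0\}$ and $\gamma^u\times\{0\}$ are stable and unstable limit cycles, without ever addressing the transverse direction inside the $z$-plane; your block-triangular monodromy matrix makes this explicit, exhibiting the multipliers as the planar nontrivial multiplier of $\gamma^{s,u}$ (modulus $<1$, resp. $>1$) together with the contracting multipliers of the $q$-block. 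That buys an actual proof of the limit-cycle claim at the cost of invoking Floquet theory. The semidefiniteness issue you flag at the end is real but is equally present, unacknowledged, in the paper: (A1) literally gives negative definiteness only on the open interior $I(\gamma^s)$, while the paper's phrase ``neighborhood of the periodic orbit in $D_\rho$'' tacitly assumes $\gamma^s\subset D_\rho$. Since $D_\rho$ is open, the clean fix is to read (A1) as ${\rm cl}(I(\gamma^s))\subset D_\rho$ (which is what the BvP estimate $\rho>9.2$ actually provides), after which your single-phase strictness argument is unnecessary; as written, your closing step still owes a reason why strict definiteness holds at some point of $\gamma^s$, which does not follow from (A1) alone.
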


\begin{proof}
By calculating Jacobian matrix for the fixed point $(0,0)\in \mathbb{R}^4$, we immediately find the eigenvalues of the matrix are $\lambda_1(0)$, $\lambda_2(0)$, $-\lambda_1(0)-\rho$ and $-\lambda_2(0)-\rho$. Then $(0,0)$ becomes a stable fixed point since the assumption (A1) since a negative definite matrix $-[\rho I+D_F(0)]$ has two eigenvalues with a negative real part.

Next, from the assumption (A1), when $z(t)$ moves around the neighborhood of the periodic orbit in $D_\rho$, then $-[\rho I+D_F(z)]$ is always negative definite. Thus, by Proposition \ref{negativedef}, $q_1(t)$ and $q_2(t)$ converge to $0$ Therefore, $\gamma^s\times\{0,0\}$ and $\gamma^u\times\{0,0\}$ become a stable and unstable limit cycle on $\mathbb{R}^4$. 
\end{proof}

\subsection*{\bf Proof of Theorem \ref{maintheorem}}

By the assumption (A1), there is a small $\varepsilon>0$ such that $\displaystyle\lim_{t\to\infty}\Phi^t(B_\varepsilon(0,0))=0$ where $B_\varepsilon(0,0)$ is a open ball in $\mathbb{R}^4$ with center $(0,0)$ and radius $\varepsilon>0$. We will show that $\pi_1(\Phi^{-t}(B_\varepsilon(0,0)))\cap \gamma^s \neq \emptyset$. Assume that the set is empty set. Since $0\in I(\gamma^s)\subset \mathbb{R}^2$, it must be hold that $\pi_1(\Phi^{-t}(B_\varepsilon(0,0)))\subset I(\gamma^s)$ which implies $\pi_1(\Phi^{-t}(B_\varepsilon(0,0)))\subset D_\rho$. Then, any orbit $z(-t)$ is included in the set $D_\rho$ for any $t\geq 0$ so that the matrix $[\rho I+D_F(z)]$ is always positive definite. From the similar argument of Proposition \ref{negativedef}, we have $|q|$ monotonically increases to infinity as $t\to-\infty$ for any $(z_0,q_0)\in B_\varepsilon(0,0)$. 

Assume that $q_1(-t)$ is bounded for any $t>0$, then $|q_2|$ must increase since $|q|$ is increasing. Moreover, there is some $T$ such that $q_2(-t)$ is always positive (or negative) for any $t>T$ and goes to $\infty$ (or $-\infty$). For the case $q_2(-t)\to\infty$ (the case $q_2(-t)\to-\infty$ is same), consider the equation $\dot{q}=(\rho+f_x)q_1+g_xq_2$. Note that the minus sings of the original equation are replaced by plus sings because we now consider the inverse time direction, $t\to-\infty$. If $g_x=0$, since $\rho+f_x$ is positive due to positive definite matrix $[\rho I+D_F(z)]$, we have $q_1(t)\geq e^{(\rho+\min f_x)t}\to\infty$ as $t\to\infty$, which contradicts to the boundedness of $q_1$. If $g_x$ is always positive (or negative), since $z(t)$ and $q_1(t)$ are bounded but $q_2(t)$ increase, we have $\dot{q}_1(t)>C$ (or $\dot{q}_1(t)<-C$) for some $C>0$ and sufficient large $t$. This means that $q_1$ cannot be bounded which is contradiction. If the case that $g_x(z(t))$ has oscillated and repeated positive and negative values, the function $G(t):=g_x(t)q_2(t)$ oscillates and diverges. In this case, let $\{s_i\}$ and $\{t_i\}$ be times satisfying  $G(s_i)=G(t_i)=0$, $\dot{G}(s_i)>0$ and $\dot{G}(t_i)<0$. Then, since $z(t)$ and $q_1(t)$ are bounded but $G(t)$ is unbounded, the time interval $t_{i+1}-t_i$ must converge to zero as $i\to\infty$. Indeed, letting 
$$
a_i=\int_{s_i}^{t_i}G(t)dt \quad {\rm and } \quad b_i=\int_{t_i}^{s_{i+1}}G(t)dt,
$$
we have $b_{i+1}\leq b_i<0<a_i\leq a_{i+1}$ and $\int_{s_1}^{t_{i+1}}G(t)dt=\sum a_i + \sum b_i$ must be bounded since $q_1$ is bounded. This implies $t_{i+1}-t_i\to0$ as $i\to\infty$ due to the increasing $q_2$. 

However, if $t_{i+1}-t_i$, either $\dot{z}(t)$ is unbounded or $g_x(t)\to0$, and both lead a contradiction. Therefore, we found $q_1(t)$ is unbounded for $t\to-\infty$. 

Finally, considering the equation $\dot{x}=-f(z)-q_1$, the function $q_1(t)$ must oscillate and diverge. Indeed if $q_1$ does not oscillate and does diverge, there are $C>0$ and $T>0$ such that $\dot{x}(t)>C$ (or $\dot{x}(t)<-C$) for any $t>T$, which implies $x(t)$ is unbounded. Hence, let $\{s_i\}$ and $\{t_i\}$ be times satisfying  $q_1(s_i)=q_1(t_i)=0$, $\dot{q}_1(s_i)>0$ and $\dot{q}_1(t_i)<0$. Then, since $x(t)$ and $f(z)$ are bounded but $q_1(t)$ is unbounded, the time interval $t_{i+1}-t_i$ must converge to zero as $i\to\infty$. However, this is impossible since the imaginary part of eigenvalues of the matrix $[\rho I + D_F(z)]$ (if they are complex numbers) is bounded so that the frequency of the oscillation of $q_1$ cannot be infinity which contradicts to $t_{i+1}-t_i\to0$. (When the eigenvalues are real number, $t_{i+1}-t_i\to0$ cannot occur since $q_1$ must get away from 0.) 

From these arguments, we can show that $z(t)$ must go out in $D_\rho$ as $t\to-\infty$ for any $(z_0,q_0)\in B_\varepsilon(0,0)$, which implies $\pi_1(\Phi^{-t}(B_\varepsilon(0,0)))\cap \gamma^s \neq \emptyset$. This completes the proof.

\section{Proof of Theorem B}
The ideas of the proof of Theorem \ref{maintheorem2} are to show that firstly the stable sets  $W^s(\tilde{z},\tilde{q})$ and $W^s(\gamma^u)$ become three-dimensional smooth manifolds by the Lemma \ref{stable1} and \ref{stable2}. Secondary, by knowing behaviors of almost all orbits $\Phi^t(z_0,q_0)$ as $t\to-\infty$ from Lemma \ref{conv}, we can derive these stable manifolds separate $\mathbb{R}^4$ to four-disjoint open connected sets. To prove this, the separation theorem in \cite{Hirsch} which is well-known in differential topology are used.
Denote the two non-trivial fixed point by $(\tilde{z}_1,\tilde{q}_1)$ and $(\tilde{z}_2,\tilde{q}_2)$ satisfying the equation \eqref{ntfp}.

\begin{lemma}\label{stable1}
Under the assumptions (A2) and (A3), the Jacobian $D_*(\tilde{z}_i,\tilde{q}_i)$  has three eigenvalues with negative real parts for the non-trivial fixed points $(\tilde{z}_i,\tilde{q}_i)$, $i=1,2$, so that $W^s(\tilde{z}_i,\tilde{q}_i)$ becomes a three-dimensional smooth manifold.
\end{lemma}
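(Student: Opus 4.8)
The plan is to determine the spectrum of the Jacobian $J_*(\tilde{z}_i,\tilde{q}_i)$ (written $D_*$ in the statement) from its block structure together with the sign of its determinant. Writing $M=D_F(\tilde{z}_i)$, $E=\left(\begin{smallmatrix}1&0\\0&0\end{smallmatrix}\right)$ and $H=\left(\begin{smallmatrix}h_1&h_2\\h_3&h_4\end{smallmatrix}\right)$, where $H$ is symmetric because $h_2=h_3$ (the mixed partials of $f$ and $g$ agree), the Jacobian reads $J_*=\left(\begin{smallmatrix}M&E\\-H&-(\rho I+M^{T})\end{smallmatrix}\right)$. First I would observe that $J_*+\tfrac{\rho}{2}I=\left(\begin{smallmatrix}M+\frac{\rho}{2}I&E\\-H&-(M+\frac{\rho}{2}I)^{T}\end{smallmatrix}\right)$ is a Hamiltonian matrix, since its off-diagonal blocks $E$ and $-H$ are symmetric; hence its spectrum is symmetric about the origin. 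Translating back, the four eigenvalues of $J_*$ occur in pairs $\{\lambda,-\rho-\lambda\}$, exactly as in the relation $\{\lambda_1,\lambda_2,-\rho-\lambda_1,-\rho-\lambda_2\}$ already seen at the origin.

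Next I would reduce the eigenvalue count to a single inequality. Each symmetry pair has real parts summing to $-\rho<0$, so it contributes either two eigenvalues with negative real part (a non-straddling pair) or one of each sign (a straddling pair). A genuine complex quadruple $\{\lambda,\bar\lambda,-\rho-\lambda,-\rho-\bar\lambda\}$ gives $\det J_*=|\lambda|^2|\rho+\lambda|^2>0$, a purely imaginary eigenvalue forces the configuration $\pm i\omega,\,-\rho\pm i\omega$ with $\det J_*>0$, and a zero eigenvalue gives $\det J_*=0$. Consequently $\det J_*<0$ already forces hyperbolicity and pins down exactly one straddling pair, hence exactly three eigenvalues with negative real part and one with positive real part. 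Granting this, the stable set $W^s(\tilde{z}_i,\tilde{q}_i)$ is a three-dimensional smooth manifold by the local stable manifold theorem.

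The heart of the argument is therefore the inequality $\det J_*(\tilde{z}_i,\tilde{q}_i)<0$, and this is exactly where (A3) enters. Expanding $\det J_*$ along the second row $(g_x,g_y,0,0)$ and invoking the fixed-point relation $\det[\rho I+D_F(\tilde{z}_i)]=0$ from \eqref{ntfp}, the $2\times2$ minors built from the lower-right block $-(\rho I+M^{T})$ collapse, and a short computation reduces the determinant to
\[
\det J_*(\tilde{z}_i,\tilde{q}_i)=-\Big[\begin{pmatrix}g_x&g_y\end{pmatrix}\begin{pmatrix}h_4&-h_2\\-h_3&h_1\end{pmatrix}\begin{pmatrix}g_x\\g_y\end{pmatrix}+\rho\big(h_1g_y-h_2g_x\big)\Big],
\]
every quantity being evaluated at $(\tilde{z}_i,\tilde{q}_i)$. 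The bracket is precisely the left-hand side of (A3), so $\det J_*<0$ and the counting argument above applies. Here (A2) serves only to guarantee that there are exactly two such fixed points, so that the indexing $i=1,2$ is meaningful.

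I expect the determinant identity to be the main obstacle, not because any individual step is deep, but because one must push the cofactor expansion far enough for the cancellations produced by $\det[\rho I+M]=0$ to surface, and then recognize the surviving terms as exactly the quadratic-plus-linear form appearing in (A3). Once this identity is in place, the Hamiltonian symmetry does the rest, converting the single scalar inequality $\det J_*<0$ into the assertion that three eigenvalues have negative real part and that $W^s(\tilde{z}_i,\tilde{q}_i)$ is a three-dimensional smooth manifold.
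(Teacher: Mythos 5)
Your proof is correct, and it reaches the paper's key inequality --- the sign of $\Lambda(0)=\det J_*$ at the nontrivial fixed points --- by a cleaner, more structural route. The paper instead computes the characteristic polynomial $\Lambda(\lambda)$ explicitly, observes from factoring $\Lambda'$ that $\lambda=-\rho/2$ is a critical point, and then argues rather informally about root locations of the quartic when $\Lambda(0)<0$, treating the all-real and complex cases separately. Your observation that $J_*+\tfrac{\rho}{2}I$ is Hamiltonian (since $E$ and $H$ are symmetric, $h_2=h_3$) yields the exact spectral symmetry $\lambda\mapsto-\rho-\lambda$, of which the paper's critical point is only a shadow, and your pair-by-pair determinant analysis turns the paper's sketch into a complete count, including the exclusion of imaginary-axis eigenvalues needed before invoking the stable manifold theorem. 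Your cofactor expansion along the row $(g_x,g_y,0,0)$, using $\det[\rho I+D_F(\tilde z_i)]=0$ to collapse the minors, does give $\det J_*=-\bigl[Q+\rho(h_1g_y-h_2g_x)\bigr]$ with $Q$ the quadratic form in (A3); this in fact repairs a sign slip in the paper, whose displayed $B(\tilde z_i)$ would make $\Lambda(0)>0$ under (A3) even though the proof asserts $\Lambda(0)<0$. The only point you leave implicit is the self-paired case $\lambda=-\rho-\lambda$, but such a pair has real part $-\rho/2<0$ and contributes positively to the determinant, so your dichotomy is unaffected.
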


\begin{proof}
For the fixed point $(\tilde{z}_i,\tilde{q}_i)$, we can calculate the characteristic polynomial as follows:
\begin{eqnarray}
\Lambda(\lambda)&:=&det[\lambda I-D_F(\tilde{z}_i,\tilde{q}_i)]\nonumber\\
&=&\lambda^4+2\rho\lambda^3-\{\rho^2+3(f_x+g_y)\rho+(f_x+g_y)^2-h_1\}\lambda^2\nonumber\\
&& -\{2\rho^2+3\rho(f_x+g_y)+(f_x+g_y)^2-h_1\}\rho\lambda -B(\tilde{z}_i)
\end{eqnarray}
where
$$
B(\tilde{z}_i)=B(\tilde{x}_i,\tilde{y}_i)=-
\begin{pmatrix}
g_x & g_y
\end{pmatrix}
\begin{pmatrix}
h_4 & -h_2 \\
-h_3 & h_1
\end{pmatrix}
\begin{pmatrix}
g_x\\ 
g_y
\end{pmatrix}
-\rho(h_1g_y-h_2g_x).
$$
Then we have
\begin{eqnarray}
\frac{d}{d\lambda}\Lambda(\lambda) &=& 4\lambda^3+6\rho\lambda^2-2\{\rho^2+3(f_x+g_y)\rho+(f_x+g_y)^2-h_1\}\lambda\nonumber\\
&&-2\{2\rho^2+3(f_x+g_y)\rho+(f_x+g_y)^2-h_1\}\nonumber\\
&=& 2(\lambda+\rho/2)(2\lambda^2+3\rho\lambda-\{2\rho^2+3(f_x+g_y)\rho+(f_x+g_y)^2-h_1\})\nonumber
\end{eqnarray}
Thus, one of the extremal values of $\Lambda(\lambda)$ takes at $\lambda=-\frac{\rho}{2}$, and we find that if $\Lambda(0)<0$, then the equation $\Lambda(\lambda)=0$ has three negative solutions and a positive solution when all eigenvalues are real numbers. The case $\Lambda(\lambda)$ has two complex eigenvalues, we can calculate its real parts are both negative if $\Lambda(0)<0$. Therefore, the stable manifold theorem \cite{Teschl} and the assumption (A3) which implies the inequality $\Lambda(0)<0$ guarantees $W^s(\tilde{z}_i,\tilde{q}_i)$ is a three-dimensional smooth manifold.

\end{proof}

\begin{lemma}\label{stable2}
$W^s(\gamma^u)\simeq S^1\times\mathbb{R}^2$ is a three-dimensional smooth manifold.
\end{lemma}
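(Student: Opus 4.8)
The plan is to linearise the system ($\ast$) along the periodic orbit $\gamma^u\times\{0\}$, whose existence as an (unstable) limit cycle of ($\ast$) is supplied by the preceding lemma, and to read off its Floquet multipliers. The key observation is that at $q=0$ every Hessian entry $h_i(z,0)$ in \eqref{hessian} vanishes, so the Jacobian restricted to the orbit is block upper-triangular,
$$
J_*(z,0)=\begin{pmatrix} D_F(z) & I \\ 0 & -[\rho I+D_F^T(z)] \end{pmatrix}.
$$
Hence the monodromy matrix of $\gamma^u\times\{0\}$ is block upper-triangular as well, and its eigenvalues (the Floquet multipliers) are the union of those produced by the two diagonal blocks along the orbit.

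For the upper-left block $D_F(z)$ I recover exactly the variational equation of the planar system $\dot z=F(z)$ along $\gamma^u$; its multipliers are the trivial one $1$ (the flow direction) and the nontrivial one $\mu_u=\exp\!\left(\int_0^T \mathrm{tr}\,D_F(z(t))\,dt\right)$, which is real, positive, and satisfies $\mu_u>1$ because $\gamma^u$ is an unstable limit cycle of the planar system. For the lower-right block I first note that the adjoint equation $\dot q=-D_F^T(z)q$ has fundamental matrix $(\Phi(t)^{-1})^T$, where $\Phi$ is the fundamental matrix of the $z$-variational equation; hence its multipliers are the reciprocals $1$ and $1/\mu_u$. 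Incorporating the constant shift $-\rho I$ multiplies the corresponding fundamental matrix by $e^{-\rho t}$, so the two multipliers of the $q$-block are $e^{-\rho T}$ and $e^{-\rho T}/\mu_u$. Since $\rho>0$, $T>0$ and $\mu_u>1$, both lie in the open interval $(0,1)$.

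Collecting these, $\gamma^u\times\{0\}$ is a hyperbolic periodic orbit of ($\ast$) with one multiplier on the unit circle (the flow direction), one unstable multiplier $\mu_u$, and two stable multipliers in $(0,1)$. By the stable manifold theorem for hyperbolic periodic orbits, $W^s(\gamma^u)$ is therefore a smooth three-dimensional manifold, and, flowing the local stable manifold backward in time, the global stable manifold is diffeomorphic to the total space of a rank-two vector bundle over the base circle $\gamma^u\simeq S^1$. Finally, to identify this bundle I use that the two stable multipliers are real and positive, so the restriction of the monodromy to the two-dimensional stable subspace has positive determinant $e^{-2\rho T}/\mu_u$; the stable bundle is thus orientable, and an orientable rank-two bundle over $S^1$ is trivial, giving $W^s(\gamma^u)\simeq S^1\times\mathbb{R}^2$.

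The main obstacle is the multiplier computation for the $q$-block: one must correctly relate $\dot q=-[\rho I+D_F^T(z)]q$ to the adjoint of the planar variational equation and track how the shift $-\rho I$ rescales the multipliers by the factor $e^{-\rho T}$. Once this is in place, the block-triangular reduction at $q=0$, the dimension count, and the triviality of the resulting bundle are routine.
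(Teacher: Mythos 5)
Your proposal is correct, and for the key step it takes a genuinely different route from the paper. The paper argues that since $\gamma^u\subset D_\rho$ (a consequence of (A1)), the matrix $-[\rho I+D_F(z)]$ is negative definite along the cycle, so by Proposition \ref{negativedef} the norm $|q(t)|$ decreases monotonically to $0$; the two $q$-directions are therefore stable, and the stable manifold theorem for periodic orbits gives the claim. You instead compute the Floquet multipliers exactly: from the block-triangular form of $J_*$ at $q=0$ and the observation that $\dot q=-[\rho I+D_F^T(z)]q$ is the $e^{-\rho t}$-rescaled adjoint of the planar variational equation, you obtain multipliers $\{1,\mu_u\}$ for the $z$-block and $\{e^{-\rho T},\,e^{-\rho T}/\mu_u\}$ for the $q$-block. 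This is sharper in two respects: it establishes stability of the $q$-directions using only $\rho>0$ and $\mu_u\geq 1$, without invoking the negative definiteness along $\gamma^u$ supplied by (A1) (which is relevant, since the paper itself remarks in Section \ref{application} that (A1) ``might be weakened''); and it actually justifies the product structure $S^1\times\mathbb{R}^2$ via orientability of the stable bundle coming from the positivity of the real multipliers, a point the paper asserts without argument. Two small caveats: the upper-right block of $J_*(z,0)$ for system ($\ast$) is $\mathrm{diag}(1,0)$ rather than $I$, because only $q_1$ enters $\dot z$ --- this is harmless, as off-diagonal blocks do not affect the spectrum of a block-triangular monodromy matrix; and the strict inequality $\mu_u>1$ requires $\gamma^u$ to be a hyperbolic limit cycle of the planar system, an assumption the paper also makes implicitly when it applies the stable manifold theorem.
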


\begin{proof}
Due to $\gamma^u\subset D_\rho$, $|q(t)|$ decreases monotonically and converges to 0 as $t\to\infty$ if $z(t)$ moves around the neighbor of $\gamma^u$, which implies both $q_1(t)$ and $q_2(t)$ are stable. Thus, from the stable manifold theorem for a periodic orbit \cite{Teschl}, $W^s(\gamma^u)\simeq S^1\times\mathbb{R}^2$ becomes a three-dimensional smooth manifold.
\end{proof}

\begin{lemma}\label{lyap-prop}
If (A4) is assumed for the original two-dimensional system $\dot{z}=F(z)$, then, for any $z_0$, there exists $T>0$ such that the solution $z(t)$ with $z(0)=z_0$ belongs to $D_\rho$ for $t>T$.
\end{lemma}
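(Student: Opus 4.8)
The plan is to combine the Lyapunov-type estimate (A4) with the standing hypotheses on the planar field $\dot z=F(z)$ (the only attractors are $0$ and $\gamma^s$, with ${\rm cl}(B(0)\cup B(\gamma^s))=\mathbb{R}^2$) and the inclusion (A1). First I would use (A4) to show that every forward orbit is bounded and, in fact, eventually trapped in a fixed compact sublevel set of $V$. Then I would invoke the two-dimensional structure to identify the $\omega$-limit set of such a bounded orbit as one of $\{0\}$, $\gamma^u$, or $\gamma^s$. Finally, since each of these limit sets lies in $D_\rho$ by (A1) and $D_\rho$ is open, I would conclude that the orbit eventually enters and remains in $D_\rho$.

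For the boundedness step I take $K$ to be compact (as intended in (A4)); since $V$ is continuous, set $c:=\sup_{z\in K}V(z)<\infty$ and consider the sublevel set $\Omega_c:=\{z:V(z)\le c\}$, which is compact by (A4)(iii). By construction $K\subset\Omega_c$, so on $\mathbb{R}^2\setminus\Omega_c\subset\mathbb{R}^2\setminus K$ we have $\dot V<0$ by (A4)(ii); hence $V(z(t))$ is strictly decreasing while the orbit stays outside $\Omega_c$, and it cannot escape to infinity because $V$ is proper and bounded below (by (A4)(i)) on the exterior region. This forces the orbit into $\Omega_c$ in finite time, and the inward-pointing of the field on $\partial\Omega_c$ (where $\dot V<0$) makes $\Omega_c$ forward invariant. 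Thus every orbit is eventually confined to the compact set $\Omega_c$.

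Next I would apply the Poincar\'e--Bendixson theorem to the bounded planar orbit: its $\omega$-limit set is a fixed point, a periodic orbit, or a cycle of fixed points joined by connecting orbits. Using the standing assumption that the only attractors are $0$ and $\gamma^s$, that $\gamma^u$ is the unique unstable cycle separating them, and that ${\rm cl}(B(0)\cup B(\gamma^s))=\mathbb{R}^2$, the only possible $\omega$-limit sets are $\{0\}$, $\gamma^u$, and $\gamma^s$. Since $0\in I(\gamma^s)$ and $\gamma^u$ lies inside $\gamma^s$, (A1) gives $\{0\}\cup\gamma^u\subset I(\gamma^s)\subset D_\rho$. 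Because $D_\rho$ is open (it is cut out by a strict definiteness condition on the smoothly varying matrix $-[\rho I+D_F(z)]$), each of these compact limit sets possesses a neighborhood contained in $D_\rho$, so any orbit approaching such a set must lie in $D_\rho$ for all $t$ larger than some $T$.

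The hard part will be the remaining case, when the orbit converges to $\gamma^s$ from the \emph{outside}: such an orbit can never enter $I(\gamma^s)$ (it cannot cross the invariant curve $\gamma^s$), so membership in $D_\rho$ for large $t$ is not delivered by (A1) alone, which only controls the interior. Closing this gap requires that $\gamma^s$ itself — and hence, by openness of $D_\rho$, a two-sided tubular neighborhood of $\gamma^s$ — lie in $D_\rho$; I would obtain this by reading (A1) as ${\rm cl}(I(\gamma^s))\subset D_\rho$, so that $\gamma^s\subset D_\rho$ and the openness of $D_\rho$ supplies a neighborhood of $\gamma^s$ meeting the exterior region into which the orbit eventually settles. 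Orbits converging to $0$, to $\gamma^u$, or to $\gamma^s$ from inside are then handled directly, since they eventually lie in the open set $I(\gamma^s)\subset D_\rho$.
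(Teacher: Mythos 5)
Your route is genuinely different from the paper's, and the step you yourself flag as ``the hard part'' is a real gap rather than a detail to be smoothed over. For an orbit converging to $\gamma^s$ from the exterior, hypothesis (A1) as written, $I(\gamma^s)\subset D_\rho$ with $I(\gamma^s)$ the set of \emph{interior} points, gives no information at all: such an orbit never enters $I(\gamma^s)$, and nothing in the paper's assumptions prevents $D_\rho$ from coinciding exactly with the open set $I(\gamma^s)$, in which case the orbit never enters $D_\rho$ and your argument fails. You close this only by replacing (A1) with the strictly stronger statement ${\rm cl}(I(\gamma^s))\subset D_\rho$, i.e.\ by changing the hypothesis rather than proving the lemma. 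Note also that the lemma is stated under (A4) alone, while your proof additionally consumes (A1), the Poincar\'e--Bendixson theorem, and a classification of $\omega$-limit sets as $\{0\}$, $\gamma^u$ or $\gamma^s$; that classification is itself not fully justified, since the standing hypotheses name the attractors and the unique unstable cycle but do not explicitly exclude, say, a saddle equilibrium outside $\gamma^s$ whose stable manifold is invisible to the basin condition ${\rm cl}(B(0)\cup B(\gamma^s))=\mathbb{R}^2$.

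The paper's proof is a short direct Lyapunov descent and avoids all of this. Suppose for contradiction that $z(t)$ avoids $D_\rho$; then (A4) (with the region where $\dot V<0$ understood to cover the complement of $D_\rho$) makes $V(z(t))$ positive and strictly decreasing, hence convergent to some $a>0$, so the orbit is confined to the compact set $\{z:\ a\le V(z)\le V(z_0)\}$, where $\dot V\le -b<0$ for some $b$; integrating gives $V(z(t))\le V(z_0)-bt<0$ for large $t$, contradicting positivity of $V$. Your first step -- trapping the orbit in a compact sublevel set of $V$ -- is sound and is essentially the same compactness ingredient; everything after it should be replaced by this uniform-descent contradiction, which needs neither (A1) nor any knowledge of the limit sets of the planar flow.
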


\begin{proof}
Assume that the solution $z(t)$ does not belong to $D_\rho$ for any $t$. Since  $V(z(t))$ is monotonic decreasing and positive, $V(z(t))$ converges to some $a>0$, that is, $a\leq V(z)\leq V(z(0))$ holds. Letting $A$ be a set $\{z\in\mathbb{R}^2\ |\ a\leq V(z)\leq V(z(0))\}$, the set $A$ becomes clearly closed and bounded set. Thus $A$ is compact. Then, since $\dot{V}$ has a supremum in $A$, that is, $\displaystyle\sup_{z\in A}\dot{V}(z)=-b<0$, we have
$$
V(z(t))=V(z(0))+\int_0^t\dot{V}(z(t))dt\leq V(x(0))-bt.
$$
Therefore, $V(x(t))<0$ holds for $t>V(x(0))/b$ which contradicts to the assumption (i) in (A4).

\end{proof}

\begin{lemma}\label{conv}
Assume (A1)-(A5). Then $|z(t)|\to\infty$ and $|q(t)|\to 0$ as $t\to-\infty$ for any initial points $(z_0,q_0)$ in $\mathbb{R}^4$ except with $W^u(\tilde{z}_1,\tilde{q}_1)\cup W^u(\tilde{z}_2,\tilde{q}_2)\cup W^u(\gamma^u)\cup\{0,0\}\cup\gamma^s\times\{0\}$.
\end{lemma}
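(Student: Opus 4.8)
The plan is to analyse the backward-time ($t\to-\infty$) behaviour of orbits through their $\alpha$-limit sets, using the monotonicity of $|q|$ supplied by (A1) and (A5) together with the non-recurrence hypothesis (AA) as a substitute for the Poincar\'e--Bendixson theorem, which is unavailable in $\mathbb{R}^4$. First I would record two monotonicity facts. Writing $h(z,q)=|q|^2$, the computation of Proposition \ref{negativedef} gives $\dot h=-q^{T}\bigl([\rho I+D_F(z)]+[\rho I+D_F^{T}(z)]\bigr)q$. Hence on $D_\rho$, where $-[\rho I+D_F]$ is negative definite by (A1), $h$ strictly decreases forward, so $|q(t)|$ strictly increases as $t\to-\infty$; whereas on $\mathbb{R}^{2}\setminus K$, where $-[\rho I+D_F]$ is positive definite by (A5), $|q(t)|$ strictly decreases as $t\to-\infty$. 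Since outside $K$ the matrix is nonsingular, we get $D_\rho\subset K$, and the two degenerate points $\tilde z_1,\tilde z_2$ of \eqref{ntfp} lie on $\partial D_\rho\subset K$.

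Next I would establish a dichotomy: for every $(z_0,q_0)$ the backward orbit is either bounded or satisfies $|z(t)|\to\infty$. The nontrivial half is excluding the case that $z(t)$ stays bounded while $|q(t)|\to\infty$; here I would reuse the oscillation-and-interval argument from the proof of Theorem \ref{maintheorem}, applied to $\dot x=f(z)+q_1$ and to the $\dot q_1,\dot q_2$ equations, to force either $x(t)$ or $\dot z(t)$ to be unbounded, a contradiction. Consequently a bounded backward orbit has both $z$ and $q$ bounded, so its $\alpha$-limit set $L$ is a nonempty compact invariant set consisting of non-wandering points.

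Then I would classify $L$. By (AA), $L$ can meet $(K\setminus D_\rho)\times\mathbb{R}^2$ only at the fixed points $(\tilde z_i,\tilde q_i)$. On $L\cap(D_\rho\times\mathbb{R}^2)$ the monotonicity above forces $q\equiv0$, for otherwise the backward growth of $|q|$ would violate the boundedness of $L$; this part of $L$ therefore projects to a compact invariant set of $\dot z=F(z)$ inside $D_\rho$, namely $\{0\}$, $\gamma^{u}$, or $\gamma^{s}$. Finally $L$ cannot meet $(\mathbb{R}^2\setminus K)\times\mathbb{R}^2$, since there the Lyapunov function of (A4) is strictly increasing in backward time (the cross term $V_xq_1$ being negligible once $|q|$ decays by (A5)), so $V$ would be unbounded on the compact set $L$. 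Hence $L\in\{(0,0),\,\gamma^{u}\times\{0\},\,\gamma^{s}\times\{0\},\,(\tilde z_1,\tilde q_1),\,(\tilde z_2,\tilde q_2)\}$. As $(0,0)$ and $\gamma^{s}\times\{0\}$ are stable with trivial unstable sets, a bounded backward orbit must lie in $\{(0,0)\}\cup(\gamma^{s}\times\{0\})\cup W^{u}(\gamma^{u})\cup W^{u}(\tilde z_1,\tilde q_1)\cup W^{u}(\tilde z_2,\tilde q_2)$, which is exactly the excluded set (its manifold structure coming from Lemmas \ref{stable1} and \ref{stable2}).

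For $(z_0,q_0)$ outside the excluded set the backward orbit is thus unbounded, whence $|z(t)|\to\infty$ by the dichotomy; once $z(t)$ has left $K$, (A5) makes $|q(t)|$ monotonically decrease in backward time and a short bootstrap ($\dot V=\dot V_{\mathrm{orig}}+V_xq_1<0$ outside $K$ for small $|q|$ prevents re-entry and drives $V\to\infty$) yields $|z(t)|\to\infty$ together with $|q(t)|\to0$ via Proposition \ref{negativedef}. I expect the classification step to be the main obstacle, specifically excluding exotic recurrence in the transition region $(K\setminus D_\rho)\times\mathbb{R}^2$ and on $\partial D_\rho$ --- precisely what (AA) is designed for, with the two degenerate fixed points treated as separate cases; the secondary technical points are ruling out the ``$z$ bounded, $q\to\infty$'' scenario and controlling the cross term $V_xq_1$ in the final escape argument.
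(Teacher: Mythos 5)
Your route is genuinely different in organization from the paper's, though it uses the same ingredients. The paper never classifies $\alpha$-limit sets: since the initial point is already assumed to lie outside the exceptional set, it argues directly that the backward orbit must eventually leave $K$ permanently --- the oscillation argument from the proof of Theorem \ref{maintheorem} rules out ``$z$ bounded, $|q|\to\infty$'' inside $K$, and (AA) together with Bolzano--Weierstrass rules out the fully bounded case (a backward orbit staying in, or returning infinitely often to, $K\setminus D_\rho$ with $|q|$ bounded produces a non-wandering point there) --- and then applies (A5) to get $|q|\to0$ and the Lyapunov function of (A4), run backward, to get $|z|\to\infty$. Your version proves more: it characterizes every bounded backward orbit as lying in the exceptional set. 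Note that both your proof and the paper's use (AA) even though the lemma as stated lists only (A1)--(A5).

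The price of the extra generality is the classification step, which as written has a gap. An $\alpha$-limit set is a compact connected invariant set and need not be a single item from your list: it could \emph{a priori} be a chain, e.g.\ $(\tilde z_i,\tilde q_i)$ together with a connecting orbit into $\gamma^u\times\{0\}$ or $(0,0)$; excluding this needs a no-cycle argument (monotone decrease of $|q|$ along forward orbits in $D_\rho\times\mathbb{R}^2$ forbids a return from $q=0$ to $\tilde q_i\neq0$, and asymptotic stability of $(0,0)$ and $\gamma^s\times\{0\}$ forbids their appearing as proper subsets of an $\alpha$-limit set). Likewise, your claim that $q\equiv0$ on $L\cap(D_\rho\times\mathbb{R}^2)$ is not immediate: the backward growth rate of $|q|$ is bounded below only on compact subsets of $D_\rho$, so an orbit in $L$ drifting toward $\partial D_\rho$ (where the definiteness degenerates --- the $\tilde z_i$ satisfy $\det[\rho I+D_F]=0$ and sit exactly there) escapes the intended contradiction. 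Both points are repairable, and the paper's own proof is no more careful at the corresponding places, but your ``Hence $L\in\{\dots\}$'' is asserted rather than proved. The remaining steps (the dichotomy via the oscillation argument, the final escape estimate) match the paper's, and your explicit handling of the cross term $V_xq_1$ is more careful than the paper's, which simply says the orbit ``follows the original system'' once $q_1\to0$.
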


\begin{proof}
First we have already know the convergent of orbits from initial points in $W^u(\tilde{z}_1,\tilde{q}_1)\cup W^u(\tilde{z}_2,\tilde{q}_2)\cup W^u(\gamma^u)\cup\{0,0\}\cup\gamma^s\times\{0\}$. Thus, we chose the initial point $(z_0,q_0)$ in $\mathbb{R}^4$ except with these sets. 

From Theorem \ref{maintheorem}, there is $T>0$ such that $\pi_1(\Phi^{-t}(z_0,q_0))\in \mathbb{R}^2\backslash D_\rho$ for $t>T$. Assume that $\pi_1(\Phi^{-t}(z_0,q_0))\in K$ for any $t>T$. In the case that $|q|$ is increasing and diverge, we can lead a contradiction from the similar arguments with Theorem \ref{maintheorem}. In the case $|q|$ is bounded, $\pi_1(\Phi^{-t}(z_0,q_0))$ moves in some bounded region for any $t>T$, which implies the existence of non-wandering point $(\overline{z},\overline{q})$ in $K$ since we can take subsequence $\{t_i\}$ such that $\Phi^{-t_i}(z_0,q_0)\to(\overline{z},\overline{q})$ as $i\to\infty$ (by Bolzano-Weierstrass theorem). This contradicts to (AA). Hence, although the orbit may go back to $K$ again after it goes out from $K$, by (AA), there is $T'>T$ such that $\pi_1(\Phi^{-t}(z_0,q_0))\in\mathbb{R}^2\backslash K$ for any $t>T'$.  

Then, since $[\rho I + D_F(z)]$ is negative definite if $z\in\mathbb{R}^2\backslash K$ by (A5), we have $|q(t)|\to 0$ as $t\to-\infty$. This implies $q_1(t)\to 0$ as $t\to-\infty$ and the orbit follows the original system $\dot{z}=F(z)$. By substituting $w(t)=z(-t)$, the system becomes $\dot{w}(t)=-F(w)$ and $\dot{V}(w(t))=-\dot{V}(-z(t))>0$ in $\mathbb{R}^2\backslash D_\rho$. Thus, $V(w(t))$ monotonically increases and goes to infinity as $t\to\infty$. If $w(t)$ is bounded, $V(w)$ is also bounded, which is contradiction. Therefore, $|z(t)|\to\infty$ as $t\to -\infty$. This completes the proof.
\end{proof}

\subsection*{\bf Proof of Theorem \ref{maintheorem2}}

First prove that $W^s(\tilde{z},\tilde{q})$ separates $\mathbb{R}^4$ to two connected sets where $(\tilde{z},\tilde{q})$ is the fixed point satisfying Eq.\eqref{ntfp}. Consider $\Phi^{-t}(W^s(\tilde{z},\tilde{q})\cap B_\varepsilon(\tilde{z},\tilde{q}))$. From Lemma \ref{conv}, any orbits $\Phi^{-t}(z_0,q_0)\to\infty$ as $t\to\infty$ for $(z_0,q_0)\in W^s(\tilde{z},\tilde{q})\cap B_\varepsilon(\tilde{z},\tilde{q})$.

Now we show that the closure of $W^s(\tilde{z},\tilde{q})$ has no boundary. Assume that the closure of the boundary is not empty, namely, we have $A\subset\partial W^s(\tilde{z},\tilde{q})$. Since $W^s(\tilde{z},\tilde{q})$ is a smooth manifold, $A\nsubseteq W^s(\tilde{z},\tilde{q})$. Thus, for any $y\in A$, there exist $\{t_i\}_{i=1}^\infty$ and $(z_i,q_i)\in \partial B_\varepsilon(\tilde{z},\tilde{q})$ such that $\Phi^{-t_i}(z_i,q_i)\to y$ for some small $\varepsilon>0$. Since $\partial B_\varepsilon(\tilde{z},\tilde{q})$ is compact set, there is subsequence $\{i_k\}_k$ such that $(z_{i_k},q_{i_k})\to(\overline{z},\overline{q})\in \partial B_\varepsilon(\tilde{z},\tilde{q})$. From Lemma \ref{conv}, for any $L>0$, there is $k$ with $t_{i_k}>T$ such that $|\pi_1(\Phi^{-t_{i_k}}(z_{i_k},q_{i_k}))|\geq L$ for any $L$, which contradicts to $\Phi^{-t_{i_k}}(z_{i_k},q_{i_k})\to y\in A$ as $k\to\infty$.

Then, Theorem 4.6 in \cite{Hirsch} tells us that if $N$ is a simply connected manifold and $M\subset N$ is a connected closed submanifold of codimension 1 with $\partial M=\partial N=\emptyset$, then $M$ separates $N$. In our case, although $W^s(\tilde{z},\tilde{q})$ is not closed, we can prove that $W^s(\tilde{z},\tilde{q})$  separates $\mathbb{R}^4$ by modifying the proof of the theorem since $W^s(\tilde{z},\tilde{q})$ spreads to infinity. (Indeed, we can check similar proof does work even our situation since, for any $r>0$, the closed disc $D^4(r)\subset\mathbb{R}^4$ satisfies $\partial(W^s(\tilde{z},\tilde{q})\cap D^4(r))\subset\partial D^4(r)$. See \cite{Hirsch} for detail.)

Similarly, we have that $W^s(\gamma^u)$ also separates $\mathbb{R}^4$. Hence, there exist three separatrixes $W^s(\tilde{z}_1,\tilde{q}_1)$, $W^s(\tilde{z}_2,\tilde{q}_2)$ and $W^s(\gamma^u)$ which separate $\mathbb{R}^4$ to four connected components since each stable set has no intersection due to a uniqueness of the solution.
The four sets denote by $A_1, A_2, A_3$ and $A_4$. $W^s(\gamma^u)$ separates 0 and $\gamma^s$ since $W^u(\gamma^u)=I(\gamma^s)\times\{0\}\backslash \{0,0\}$ and $W^u(\gamma^u)$ and $W^s(\gamma^u)$ do not intersect transversally. Hence we can take the sets satisfying $0\in A_2$ and $\gamma^s\in A_3$. Moreover, the set $A_2$ and $A_3$ are bounded for $q$, that is, there is a constant $L>0$ such that $|q|<L$ for $(z,q)\in A_2\cup A_3$ since $A_2\cup A_3$ are between $W^s(\tilde{z}_1,\tilde{q}_1)$ and $W^s(\tilde{z}_2,\tilde{q}_2)$. 

Finally, there is no non-trivial attractor for our system ($\ast$) since if $z\in D_\rho$ (or $z\in K^c$), then $|q|$ has decreased (or increased), and there is no non-wandering point in $K\backslash D_\rho$. Since there is no attractor except with the stable fixed point 0 in $A_2$, the orbit $\Phi^t(z_0,q_0)$ goes to either 0 or $\infty$ for $(z_0,q_0)\in A_2$. Assuming that $\Phi^t(z_0,q_0)$ diverges, $(x(t),q(t))$ must go to $(\infty,0)$ since $|q|$ is bounded, which contradicts to (A4). Thus, $\Phi^t(z_0,q_0)$ converges to 0 for any $(z_0,q_0)\in A_2$.
Similarly, $\Phi^t(z_0,q_0)$ converges to $\gamma^s$ for $(z_0,q_0)\in A_3$. 
Since there is no attractor in $A_1$ and $A_4$, the orbit $\Phi^t(z_0,q_0)$ must diverge for $(z_0,q_0)\in A_1\cup A_4$. This completes the proof.\qed

\begin{remark}\label{final}
Even if the assumption (A4) and (A5) are not assumed, the stable set $W^s(\tilde{z},\tilde{q})$ can separate $\mathbb{R}^4$ if any orbits $\Phi^{-t}(z_0,q_0)\to\infty$ as $t\to\infty$ for $(z_0,q_0)\in W^s(\tilde{z},\tilde{q})$. However, in this case, we cannot conclude that $\Phi^t(z_0,q_0)\to 0$ (or $\gamma^s$) for any $(z_0,q_0)\in A_2$ (or $A_3$) since the sets $A_2$ and $A_3$ may be unbounded for $q$ and some orbits in the sets may diverge.

Furthermore, if not only (A4) and (A5) but also (AA) are not imposed, some non-trivial invariant set may exist. The system has no fixed point except with 0 and $(\tilde{z}_i,\tilde{q}_i)$, $i=1,2$, and non-trivial attractor for $t\to-\infty$ cannot exists because of div$_*=2\rho$. However, we cannot verify the existence of invariant set like a periodic orbit or chaos orbit. If it has two or three dimension as its unstable directions, a heteroclinic orbit between it and $(\tilde{z},\tilde{q})$ or $\gamma^u$ may exist and this makes our discussions more complicated. 
\end{remark}

\section{Application for Bonhoeffer-van der Pol model}\label{application}
In this section, we apply our results to the concrete example called Bonhoeffer-van der Pol (BvP) model described by
\begin{eqnarray}
\begin{cases}
\dot{x}=c(x-x^3/3+y-r)\\
\dot{y}=-(x+a+by)/c
\end{cases}.
\end{eqnarray}
It is well known that this system has a stable limit cycle, a unstable limit cycle and a stable fixed point when $a=0.7,b=0.8,c=3.0,r=0.342$, and the fixed point $(x_*,y_*)\fallingdotseq (0.958366,0.322957)$. We can calculate the Jacobian matrix as
$$
D_F(x,y)=\begin{pmatrix}
c(1-x^2) & c \\
-1/c & -b/c
\end{pmatrix}.
$$ 
The eigenvalues $\lambda_{\pm}$ of $D_F(x_*,y_*)$ are $- 0.011031\pm 0.966773i$. Next, by applying the system ($\ast$) for BvP model, we obtain the following autonomous four-dimensional ordinary differential equation.
\begin{eqnarray}
\begin{cases}
\dot{x}=c(x+x^3/3+y-r)+q_1\\
\dot{y}=-(x+a+by)/c\\
\dot{q_1}=-(\rho+c(1-x^2))q_1+\frac{1}{c}q_2\\
\dot{q_2}=-cq_1-(\rho-\frac{b}{c})q_2
\end{cases}
\end{eqnarray}
For example, if we give the initial point $(x_0,y_0)=(1.35,-0.26)$, $(q_1(0),q_2(0))=(0.0,-5.0)$ and $\rho=2.5$, we can observe numerically in the figure \ref{fig1} that the point near the stable limit cycle goes to the stable fixed point.

\begin{figure}[htp]
\begin{center}
\includegraphics[bb=0 0 750 350, width=15cm]{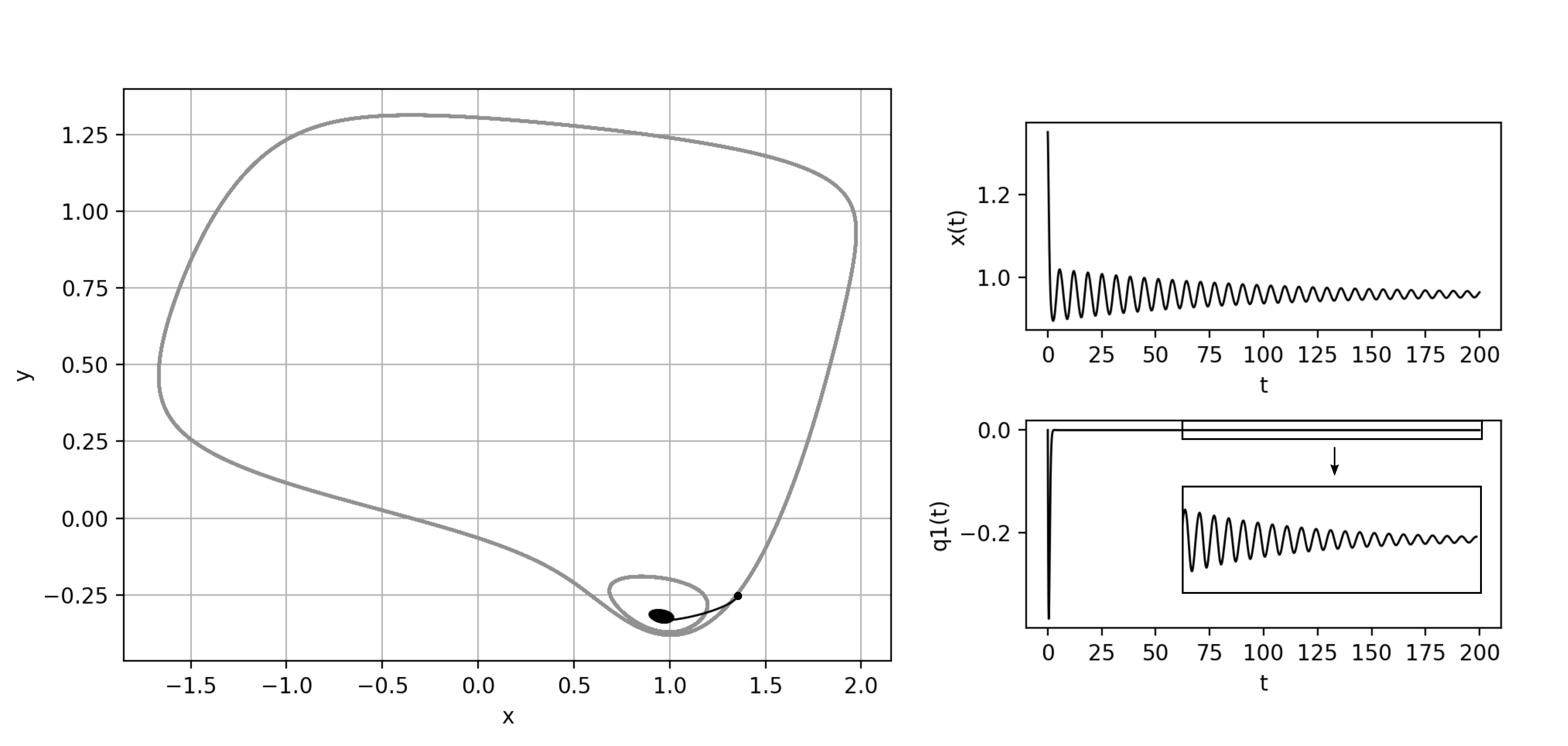}
\caption{The orbit of $(x(t),y(t))$ for the initial point $(x_0,y_0)=(1.35,-0.26)$ and $(q_1(0),q_2(0))=(0.0,-5.0)$ by our discount model with the rate $\rho=2.5$ is illustrated. The big (small) gray closed curve in the left is the stable (unstable) limit cycle for the original BvP model.  }
\label{fig1}
\end{center}
\end{figure}

Now, we can define $D_\rho$ for the model as
$$
D_\rho=\left[-\sqrt{1+\frac{\rho}{c}-\frac{(c^2-1)^2}{4c^2(c\rho-b)}},\ \sqrt{1+\frac{\rho}{c}-\frac{(c^2-1)^2}{4c^2(c\rho-b)}}\right]\times \mathbb{R}
$$
Next we estimate $\rho$ satisfying our assumption (A1)-(A5):

\begin{itemize}
\item[(A1)] Since $I(\gamma^s)\subset[-2,2]$, we must choose $\rho>9.2$. However, such $\rho$ is so large that the set of initial points becomes small and it may be difficult to capture the appropriate initial points. Moreover, Figure \ref{fig1} and \ref{fig2} suggest the assumption (A1) might be weakened.
\item[(A2)] Considering the equations $det[\rho I+D_F(z)]=(\rho+c(1-x^2))(\rho-b/c)+1=0$ and $g(x,y)=0$, we find that the number of solution is always just two.
\item[(A3)] From $\Lambda(0)=f(z)f_{xx}(z)(g_y^2(z)+\rho g_y(z)) < 0$ and $f(z)f_{xx}(z) > 0$, then (A3) is satisfied if $\rho>-g_y=1/c\fallingdotseq 0.333\cdots$.
\item[(A4)] The assumption is independent of $\rho$.
\item[(A5)] Unfortunately, for any $z\in\mathbb{R}^2\backslash D_\rho$, the matrix $-[\rho I +D_F(z)]$ cannot be positive definite so that this assumption cannot be satisfied. However, Remark \ref{final} suggest that even if (A5) is not satisfied, the space $\mathbb{R}^4$ can be separated if almost orbits diverge as $t\to-\infty$. We can check it numerically, the assumption (A5) also might be weakened.
\item[(AA)] It is difficult to show that the assumption is satisfied. However, we expect to notice the non-trivial closed orbit in the numerical experiments if it exists. 
\end{itemize}

The figure \ref{fig2} describes classifications of initial points of control variables $(q_1,q_2)$ $\in[-5,5]^2$ for the several initial points $(x_0,y_0)$ in the case $\rho=2.5$ which satisfied all assumptions. The orbit starting from the black, white and gray region goes to the stable fixed point, goes to the stable limit cycle and diverge respectively.

\begin{figure}[htp]
\begin{center}
\includegraphics[bb=0 0 800 450, width=16cm]{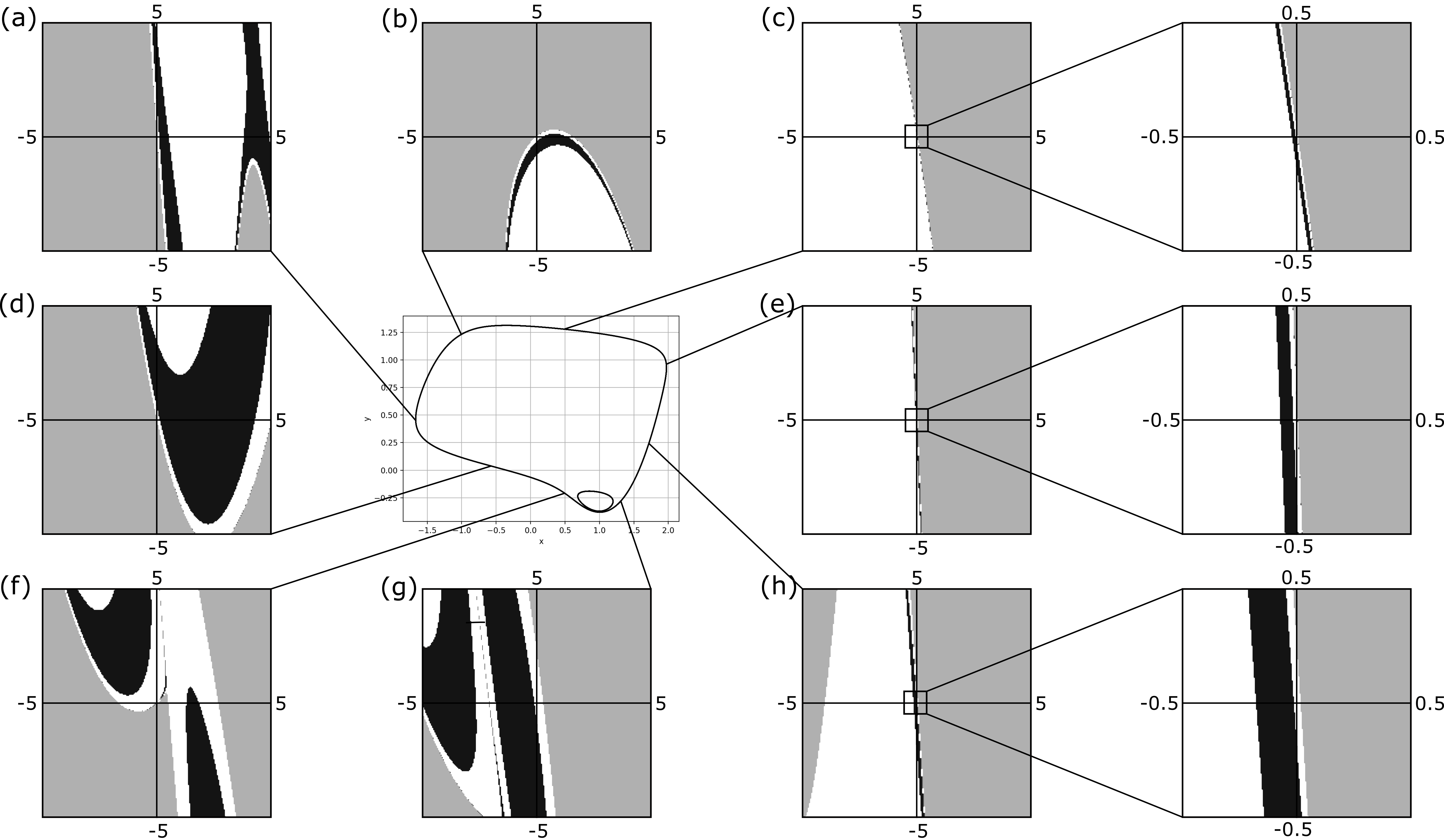}
\caption{The classifications of $(q_1$,$q_2)$ planes are illustrated for some $(x_0,y_0)$. If $(q_1(0),q_2(0))$ is chosen in the black, white and gray region, the orbit $(x(t),y(t))$ goes to the stable fixed point, goes to the stable limit cycle and diverges respectively. The initial point $(x_0,y_0)$ is (a)  ($-$1.66,0.42), (b) ($-$1.0,1.23), (c) (0.5,1.27), (d) ($-$0.62,0.04), (e) (1.98,0.93), (f) (0.5,$-$0.22), (g) (1.35,$-$0.26), (h)  (1.73,0.25).}
\label{fig2}
\end{center}
\end{figure}

\begin{remark}
In the equation ($\ast$), although we give the control variables $q_1$ to the only $x$ direction, we naturally can consider the system which has a second control variable $q_2$ into the $y$ direction as follows:
\begin{eqnarray}\label{2sideeq}
\begin{cases}
\dot{z}=F(z)+q\\
\dot{q}=-[\rho I+D_F(z)]q
\end{cases}\ \ \ \text{with}\ \ \  q=(q_1,q_2).
\end{eqnarray}
that is,
\begin{eqnarray}
\begin{cases}
\dot{x}=f(x,y)+q_1\\
\dot{y}=g(x,y)+q_2\\
\dot{q_1}=-(\rho+f_x(x,y))q_1-g_x(x,y)q_2\\
\dot{q_2}=-f_y(x,y)q_1-(\rho+g_y(x,y))q_2
\end{cases}\nonumber
\end{eqnarray}
We can also prove Theorem \ref{maintheorem} and Theorem \ref{maintheorem2} for this system if all assumptions are hold. However, it might be more difficult to satisfy the assumptions. For example, the characteristic polynomial becomes
\begin{eqnarray}\label{2sideeq2}
\Lambda(\lambda)&:=&det[\lambda I-D_F(z,q)]\nonumber\\
&=&\lambda^4+2\rho\lambda^3-\{\rho^2+3(f_x+g_y)\rho+(f_x+g_y)^2-(a+d)\}\lambda^2\nonumber\\
&& -\{2\rho^2+3\rho(f_x+g_y)+(f_x+g_y)^2-(a+d)\}\rho\lambda -\tilde{B}(z,q),
\end{eqnarray}
where
\begin{eqnarray}
\tilde{B}(z,q)&=&-\begin{pmatrix}
f_x & f_y
\end{pmatrix}
\begin{pmatrix}
h_4 & -h_2 \\
-h_3 & h_1
\end{pmatrix}\begin{pmatrix}
f_x\\ 
f_y
\end{pmatrix}
-\begin{pmatrix}
g_x & g_y
\end{pmatrix}
\begin{pmatrix}
h_4 & -h_2 \\
-h_3 & h_1
\end{pmatrix}\begin{pmatrix}
g_x\\ 
g_y
\end{pmatrix}\nonumber\\
&&-\rho(h_1g_y-h_2g_x-h_3f_y+h_4f_x)-(h_1h_4-h_2h_3),\nonumber
\end{eqnarray}
then the relation $\tilde{B}(\tilde{z},\tilde{q})<0$ must be hold instead of assumption (A3). For the BvP model, by using the equation \eqref{2sideeq2}, $\rho$ must be satisfied $\rho>c^3+1/c\fallingdotseq 27.33\cdots$. Therefore we find that using the system ($\ast$) is better than the system \eqref{2sideeq}. Note that if we consider the control variables to the only $y$ direction, we can show that $\Lambda(0)$ is always positive so that the assumption (A3) cannot be hold. This implies that the initial points cannot be classified by the separatrix $W^s(\tilde{z},\tilde{q})$.
\end{remark}

\appendix

\section{Infinite horizon with discounting}\label{appA}

Consider the infinite horizon optimal control problem described by 
\begin{eqnarray}
\begin{cases}
\text{Minimize} & \int_0^\infty |p(t)|^2/2 dt \\
\text{subject to} & \dot{z}(t)=F(z(t))+p(t),\ \ \  \displaystyle z(0)=z_0, \lim_{t\to\infty}z(t)=z_\infty
\end{cases}
\end{eqnarray}
where $F:\mathbb{R}^n\to\mathbb{R}^n$ is a smooth function, $z(t),p(t)\in\mathbb{R}^n$ are vector valued functions and $z_0,z_\infty\in\mathbb{R}^n$.
The problem is to find the optimal control function $p(t)$ to make the state $z(t)$ to move from the initial state $z_0$ to the terminal state $z_\infty$ under the differential equation $\dot{z}(t)=F(z(t))+p(t)$. Define the functional called performance function or Lagrangian by
$$
\mathcal{L}(z,p,\mu)=|p|^2/2+\mu^T(\dot{z}-F(z)-p)
$$
where the vector valued function $\mu(t)\in\mathbb{R}^n$ is as a Lagrange multiplier and $x^T$ denotes the transpose of vector $x$. By the usual variational principle, we can obtain the following system on $\mathbb{R}^{2n}$,
\begin{eqnarray}\label{sys}
\begin{cases}
\dot{z}=F(z)+p\\
\dot{p}=-D_F(z)^T p
\end{cases}
\end{eqnarray}
where $D_F(x)$ is the Jacobian matrix of $F$ for $z$ and $A^T$ denotes the transpose of the matrix $A$.

Now we will focus on the stability problem for a fixed point of the system \eqref{sys}. Assume that the system $\dot{z}=f(z)$ has a fixed point $z_*$ which have non-zero eigenvalues $\{\lambda_i\}_{i=1}^n$ of the Jacobian matrix $D_F(z_*)$. Then we immediately find that $(z_*,0)\in\mathbb{R}^{2n}$ becomes the fixed point of the system \eqref{sys} on $\mathbb{R}^{2n}$ whose eigenvalues are $\{\pm\lambda_i\}_{i=1}^n$ which implies the point $(z_*,0)$ becomes always a saddle. This suggests that the control variable $p(t)$ may extremely increase or diverge even if $z(t)$ goes to $z_*$ for sufficiently large $t$.

To avoid the trouble, we propose the optimal control problem with a discount parameter $\tilde{\rho}$ as follows:
\begin{eqnarray}
\begin{cases}\label{discountcontrol}
\text{Minimize} & \int_0^\infty e^{-\tilde{\rho}t}|p(t)|^2/2 dt \\
\text{subject to} & \dot{z}(t)=F(z(t))+p(t), z(0)=z_0, \displaystyle \lim_{t\to\infty}z(t)=z_\infty.
\end{cases}
\end{eqnarray}
Give the Lagrangian with a discount term $e^{-\tilde{\rho} t}$ by
$$
\mathcal{L}(z,p,\mu)=e^{-\tilde{\rho} t}|p|^2/2+\mu^T(\dot{z}-f(z)-p),
$$
and by the variational principle, we have
\begin{eqnarray}
\begin{cases}
\dot{z}=F(z)+p e^{\tilde{\rho} t}\\
\dot{p}=-D_F(z)^Tp.
\end{cases}\nonumber
\end{eqnarray}
Changing the variables $q(t)=p(t)e^{\tilde{\rho} t}$ leads the autonomous system on $\mathbb{R}^{2n}$,
\begin{eqnarray}\label{mainsys}
\begin{cases}
\dot{z}=F(z)+q\\
\dot{q}=\tilde{\rho} q -D_f(z)^T q.
\end{cases}
\end{eqnarray}
The eigenvalues of the Jacobian matrix for the fixed point $(z_*,0)$ for the system \eqref{mainsys} become $\lambda_i$ and $-\lambda_i+\tilde{\rho}$. Thus, when $\lambda_i<0$ for all $i$, the point $(x_*,0)$ can become stable fixed point by taking $\rho<0$ satisfying $-\lambda_i-\tilde{\rho}<0$ for all $i$. By substituting $\rho=-\tilde{\rho}$, we obtain the system ($\ast$). Therefore, we can expect that it is possible to realize the stable control by considering infinite horizon optimal control problem with ``negative'' discount.

Note that, by Legendre transformation, the system \eqref{mainsys} can be rewritten as
\begin{eqnarray}
\begin{cases}
\dot{z}=\frac{\partial H}{\partial q}\\
\dot{q}=-\frac{\partial H}{\partial z}+\tilde{\rho} q
\end{cases}
\end{eqnarray}
where the Hamiltonian $H$ is given by
\begin{eqnarray}
H(z,q,\mu):=e^{-\tilde{\rho}t}|q|^2/2+\mu^T(F(z)+q)\nonumber
\end{eqnarray}
Remark that we do not know generally whether the system is a Hamilton system if $\rho\neq 0$. In the case $\rho$ is positive, it is known that the integral in \eqref{discountcontrol} converges as $T\to\infty$ for a bounded $p(t)$ (See \cite{Carlson}). On the other hand, for the negative rate $\rho$, the integral in \eqref{discountcontrol} does not converge generally. Although the infinite horizon problem with positive discount rate is well discussed in many previous works, for example \cite{Kamien, Carlson, Aseev1}, for the negative discount rate, it has not developed enough due to the difficulty of the nonlinearity.

\section{For one dimensional control problems}\label{appB}

Our discount model applied to one-dimensional autonomous system $\dot{x}=f(x)$ can be calculated by elementary methods since we know a non-existence of closed orbit by Poincare-Bendixson theorem in two-dimensional systems. Because these calculus help us to understand arguments for the higher dimensional model, we summarise in the appendix section.

Let $f:\mathbb{R}\to\mathbb{R}$ be a smooth map and $f$ has more than two stable fixed points. We denote the sets of stable and unstable fixed points by $FP^s:=\{x_1^s,x_2^s,\cdots,x_{N+1}^s\}$ and $FP^u:=\{x_1^u,x_2^u,\cdots,x_{N}^u\}$. Consider the two-dimensional system with discount as follows:
\begin{eqnarray}\label{twodimsys}
\begin{cases}
\dot{x}=f(x)+q\\
\dot{q}=-(\rho + f'(x))q
\end{cases}
\end{eqnarray}
Let $D_\rho:=\{x\in\mathbb{R}\ |\ \rho+f'(x)>0\}$. Non-trivial fixed points of \eqref{twodimsys} can be calculated by
\begin{eqnarray}\label{ntfpdim1}
\rho +f'(x)=0,\ \ \  f(x)+q=0.
\end{eqnarray}
We assume the followings:
\begin{itemize}
\item[(A1)'] $FP^s, FP^u\in D_\rho$.
\item[(A2)'] $\#\{x\in\mathbb{R}\ |\ \rho +f'(x)=0\}=2$.
\item[(A3)'] $f(x)f''(x) > 0$ in $\mathbb{R}\backslash D_\rho$,
\item[(A4)'] $f'(x)\to-\infty$ if $|x|\to \infty$ 
\end{itemize}
For one-dimensional model, we usually give a Lyapunov function as $V(x)=-f'(x)$. Then, $V(x) > 0$ and $\dot{V}(x(t))<0$ in $\mathbb{R}\backslash$ by (A2)' and (A3)'. Therefore the assumption (A4) is satisfied by (A2)' and (A3)' together with (A4)'. Finally, the assumption (A5) is automatically satisfied by taking $K=D_\rho$ in the case of one-dimensional model. Then, we can show Theorem \ref{maintheoremdim1} under the assumption (A1)'-(A4)'.

The Jacobian matrix for the system $F$ is given by

$$
D_F(x,q)=\begin{pmatrix}
f'(x) & 1 \\
-f'(x)q & -(\rho+f'(x))
\end{pmatrix}
$$

\begin{lemma}
If $x_*\in FP^s$, then $(x_*,0)$ becomes a stable fixed point of $F$ . If $x_*\in FP^u$, then $(x_*,0)$ becomes a saddle. 
\end{lemma}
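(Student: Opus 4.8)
The plan is to linearize the system \eqref{twodimsys} at the candidate fixed point $(x_*,0)$ and read off its eigenvalues directly. First I would confirm that $(x_*,0)$ is indeed a fixed point: since $x_*$ is a fixed point of $\dot{x}=f(x)$ we have $f(x_*)=0$, so $\dot{x}=f(x_*)+0=0$ and $\dot{q}=-(\rho+f'(x_*))\cdot 0=0$. Substituting $q=0$ into the Jacobian $D_F(x,q)$ makes the lower-left entry (which is proportional to $q$) vanish, leaving the upper-triangular matrix
$$
D_F(x_*,0)=\begin{pmatrix} f'(x_*) & 1 \\ 0 & -(\rho+f'(x_*)) \end{pmatrix},
$$
whose eigenvalues are therefore simply its diagonal entries $\lambda_1=f'(x_*)$ and $\lambda_2=-(\rho+f'(x_*))$.

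The second step is to determine the sign of each eigenvalue in the two cases. For the scalar equation $\dot{x}=f(x)$ a hyperbolic fixed point is stable precisely when $f'(x_*)<0$ and unstable when $f'(x_*)>0$, so the sign of $\lambda_1$ is governed immediately by whether $x_*\in FP^s$ or $x_*\in FP^u$. The sign of $\lambda_2$ is controlled by assumption (A1)': since $FP^s,FP^u\subset D_\rho$ and $D_\rho=\{x\ |\ \rho+f'(x)>0\}$, every fixed point satisfies $\rho+f'(x_*)>0$, whence $\lambda_2=-(\rho+f'(x_*))<0$ in both cases.

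Combining these observations finishes the argument. If $x_*\in FP^s$ then $\lambda_1<0$ and $\lambda_2<0$, so both eigenvalues are negative and $(x_*,0)$ is a sink, i.e.\ a stable fixed point of the four-variable-analogue system $F$. If $x_*\in FP^u$ then $\lambda_1>0$ while $\lambda_2<0$, yielding one eigenvalue of each sign, so $(x_*,0)$ is a saddle.

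There is essentially no hard obstacle here; the content of the lemma is the structural observation that passing to the discounted system \eqref{twodimsys} retains the original state eigenvalue $f'(x_*)$ while attaching a new control eigenvalue $-(\rho+f'(x_*))$, and that assumption (A1)' is exactly what forces this new eigenvalue to be stable at every fixed point, regardless of the stability type of $x_*$ in the original flow. The only point requiring care is to record that the $q$-proportional lower-left entry of $D_F$ vanishes at $q=0$, since it is this triangular structure that makes the spectrum immediately readable.
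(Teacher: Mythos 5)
Your proposal is correct and follows essentially the same route as the paper: both compute the Jacobian at $(x_*,0)$, observe that it is upper triangular there so the eigenvalues are $f'(x_*)$ and $-(\rho+f'(x_*))$, and then use (A1)$'$ to fix the sign of the second eigenvalue while the sign of $f'(x_*)$ distinguishes the sink from the saddle. Your write-up is slightly more careful than the paper's (you explicitly check that $(x_*,0)$ is a fixed point and that the $q$-proportional lower-left entry vanishes), but there is no substantive difference in approach.
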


\begin{proof}
For the fixed point $(x_*,0)$, we have the Jacobian matrix is
$$
D_F(x_*,0)=\begin{pmatrix}
f'(x_*) & 1 \\
0 & -(\rho+f'(x_*))
\end{pmatrix},
$$
and the eigenvalues of the matrix are $f'(x_*)$ and $-f'(x_*)-\rho$. If $x_*\in FP^u$, then $(x_*,0)$ becomes a saddle since $-f'(x_*)-\rho<0$. If $x_*\in FP^s$, then $(x_*,0)$ becomes stable from the assumption (A1).

\end{proof}

\begin{lemma}
There are two non-trivial fixed points of $F$ satisfying \eqref{ntfpdim1}, which are saddles.
\end{lemma}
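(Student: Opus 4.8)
The plan is to locate the non-trivial fixed points explicitly and then read off their type from the linearization. By \eqref{ntfpdim1}, a non-trivial fixed point $(\tilde{x},\tilde{q})$ must satisfy $\rho+f'(\tilde{x})=0$ together with $\tilde{q}=-f(\tilde{x})$. First I would invoke (A2)', which guarantees that $\rho+f'(x)=0$ has exactly two roots, say $\tilde{x}_1$ and $\tilde{x}_2$; setting $\tilde{q}_i=-f(\tilde{x}_i)$ then yields precisely two candidates $(\tilde{x}_i,\tilde{q}_i)$, $i=1,2$. I would check these are genuinely non-trivial, i.e. $\tilde{q}_i\neq0$: since $D_\rho=\{x\ |\ \rho+f'(x)>0\}$ is open, each root $\tilde{x}_i$ lies on its boundary and hence in the closed set $\mathbb{R}\backslash D_\rho$, whereas every fixed point of $\dot{x}=f(x)$ lies in $D_\rho$ by (A1)'; therefore $f(\tilde{x}_i)\neq0$ and $\tilde{q}_i\neq0$.

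Next I would determine stability through the Jacobian at $(\tilde{x}_i,\tilde{q}_i)$. Differentiating the second equation of \eqref{twodimsys} gives $\partial_x\dot{q}=-f''(x)q$, and using the defining relation $\rho+f'(\tilde{x}_i)=0$ the lower-right entry vanishes, so
\begin{eqnarray}
D_F(\tilde{x}_i,\tilde{q}_i)&=&\begin{pmatrix}
-\rho & 1\\
-f''(\tilde{x}_i)\tilde{q}_i & 0
\end{pmatrix}.\nonumber
\end{eqnarray}
Its determinant equals $f''(\tilde{x}_i)\tilde{q}_i=-f(\tilde{x}_i)f''(\tilde{x}_i)$. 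Because $\tilde{x}_i\in\mathbb{R}\backslash D_\rho$ and (A3)' gives $f(\tilde{x}_i)f''(\tilde{x}_i)>0$ there, this determinant is strictly negative.

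Finally I would conclude from $\det D_F(\tilde{x}_i,\tilde{q}_i)<0$ that the two eigenvalues are real with opposite signs, so each $(\tilde{x}_i,\tilde{q}_i)$ is a saddle, which is exactly the claim.

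I do not expect a genuine obstacle here, since everything reduces to a planar computation; the one point deserving care is the applicability of (A3)' at the fixed points. These points sit exactly on the boundary $\rho+f'(x)=0$, so I would make explicit that $\mathbb{R}\backslash D_\rho$ is closed and contains this boundary, which legitimizes the strict inequality $f(\tilde{x}_i)f''(\tilde{x}_i)>0$ and hence the saddle conclusion. In this one-dimensional reduction there is none of the four-dimensional topological difficulty, precisely because the phase space is the plane.
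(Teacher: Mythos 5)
Your proof is correct and follows essentially the same route as the paper: identify the two fixed points via (A2)', compute the Jacobian there (with $(2,1)$-entry $f''(\tilde{x}_i)f(\tilde{x}_i)$), and deduce the saddle property from $f(\tilde{x}_i)f''(\tilde{x}_i)>0$ on $\mathbb{R}\backslash D_\rho$; using the sign of the determinant instead of the explicit eigenvalue formula $(-\rho\pm\sqrt{\rho^2+4f''f})/2$ is an immaterial difference. Your added care in checking that the $\tilde{x}_i$ lie in the closed set $\mathbb{R}\backslash D_\rho$ so that (A3)' applies, and that $\tilde{q}_i\neq 0$, fills in details the paper glosses over (its proof cites (A4) for the saddle conclusion, which appears to be a typo for (A3)').
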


\begin{proof}
From the assumption (A2)', we have only two non-trivial fixed points of $F$. For the fixed point $(\tilde{x},\tilde{q})$, the Jacobian matrix is
$$
D_F(\hat{x},\hat{q})=\begin{pmatrix}
-\rho & 1 \\
f''(\hat{x})f(\hat{x}) & 0
\end{pmatrix},
$$
and the eigenvalues of the matrix are $(-\rho\pm \sqrt{\rho^2+4f''(\hat{x})f(\hat{x})})/2$. Therefore $(\hat{x},\hat{q})$ must be saddle from (A4). 
\end{proof}

\begin{theorem}\label{maintheoremdim1}
Assume that (A1)'-(A4)' hold. Then, there exist $(N+2)$ disjoint simply connected subset $A_0,A_1,\cdots,A_{N+1}\subset\mathbb{R}^2$ such that
${\rm cl}(A_0\cup \cdots \cup A_{N+1})=\mathbb{R}^2$ and
 $$
\lim_{t\to\infty}\Phi^t(x_0,q_0)=\begin{cases}
(x_i^s,0) & \text{ for } (x_0,q_0)\in A_i\ \ (i=1,\cdots,N)\\
\infty & \text{ for } (x_0,q_0)\in A_0 \cup A_{N+1}.
\end{cases}
$$
\end{theorem}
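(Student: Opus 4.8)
The plan is to reconstruct the entire phase portrait of the planar system \eqref{twodimsys} and then read the classification off the stable manifolds of its saddles. The two preceding lemmas already locate every equilibrium: the $N+1$ stable nodes $(x_i^s,0)$, the $N$ saddles $(x_j^u,0)$, and the two non-trivial saddles $(\tilde{x}_1,\tilde{q}_1),(\tilde{x}_2,\tilde{q}_2)$ sitting on $\partial D_\rho$, where by (A2)' and (A4)' the set $D_\rho=(\tilde{x}_1,\tilde{x}_2)$ is an interval containing all the original fixed points. Two planar facts make this case tractable and replace the heavy machinery of Theorem \ref{maintheorem2}. First, the axis $\{q=0\}$ is invariant and carries exactly the original flow $\dot{x}=f(x)$, so each $W^u(x_j^u,0)$ runs along the axis and connects heteroclinically to the two neighbouring stable nodes. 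Second, the divergence of \eqref{twodimsys} equals $f'(x)-(\rho+f'(x))=-\rho<0$, so Bendixson's criterion rules out closed orbits, and with the Poincar\'e--Bendixson theorem this forces the $\omega$-limit set of every bounded forward orbit to be a single equilibrium.

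Next I would prove the backward-time analogue of Lemma \ref{conv}: every orbit not lying on a stable manifold of a saddle (and not a stable node) satisfies $|x(t)|\to\infty$ and $q(t)\to0$ as $t\to-\infty$. As in the proof of Theorem \ref{maintheorem}, such an orbit must eventually leave $D_\rho$ in backward time; once $x\notin D_\rho$ we have $\rho+f'(x)<0$, whence $\frac{d}{dt}\log|q|=-(\rho+f'(x))>0$ and therefore $q(t)\to0$ as $t\to-\infty$. With $q\to0$ the $x$-component follows the time-reversed original flow, along which the function $V(x)=-f'(x)$ — which by (A2)'--(A4)' plays the role of the Lyapunov function in (A4) outside $D_\rho$ and tends to $+\infty$ as $|x|\to\infty$ — increases without bound, forcing $|x(t)|\to\infty$ exactly as in Lemma \ref{lyap-prop}.

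From here the separation is elementary in the plane. By the stable-manifold theorem \cite{Teschl} each saddle has a one-dimensional stable manifold, and by the escape property just established both backward branches of each such manifold leave every compact set; hence each $W^s$ is a properly embedded line whose closure has empty boundary and which therefore separates $\mathbb{R}^2$ into two pieces, as in the proof of Theorem \ref{maintheorem2} (in dimension two one may invoke the Jordan-type separation statement directly in place of Theorem 4.6 of \cite{Hirsch}). Since distinct stable manifolds are disjoint by uniqueness of solutions, the $N+2$ curves $W^s(x_j^u,0)$ and $W^s(\tilde{x}_i,\tilde{q}_i)$ cut $\mathbb{R}^2$ into connected, simply connected open components: the two lying beyond the non-trivial saddle manifolds are unbounded, and each of the remaining strips contains exactly one stable node $(x_i^s,0)$. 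These are the sets $A_0,A_1,\dots$ of the statement, and their union has closure $\mathbb{R}^2$. To fix fates, note that each component contains no saddle in its interior, so by the no-closed-orbit/Poincar\'e--Bendixson dichotomy every forward orbit in it is either bounded, hence convergent to the unique interior node, or unbounded; connectedness and continuity of $\Phi^t$ then give a single fate per component — convergence to the node for the interior strips, and divergence for the two outer regions, where $|q|$ grows and the orbit is driven out of $D_\rho$ with no attractor to capture it.

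The step I expect to be the main obstacle is the global one: verifying that each stable manifold is genuinely a properly embedded, non-self-accumulating separating curve, i.e.\ that the backward-escape property of the previous paragraph holds strongly enough to exclude a branch spiralling toward or limiting onto some other invariant set, and that consequently the basin boundaries are exactly these $N+2$ curves and nothing else. This is precisely where the planar tools — invariance of the $q=0$ axis, the sign-definite divergence via Bendixson, and Poincar\'e--Bendixson — do the work that in four dimensions required the extra hypothesis (AA), and it is the reason the one-dimensional problem can be settled under the weaker set (A1)'--(A4)'.
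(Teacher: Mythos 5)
Your proposal is correct and follows essentially the same route as the paper's proof: backward-time escape along the stable manifolds of the $N+2$ saddles, so that each $W^s$ is a properly embedded curve separating $\mathbb{R}^2$; pairwise disjointness by uniqueness of solutions; and classification of each resulting component by its unique interior attractor. Two differences in execution are worth recording. Where the paper merely asserts that each region contains no non-trivial attractor, you make this rigorous by computing the divergence of \eqref{twodimsys} to be $-\rho<0$ (Bendixson's criterion excludes closed orbits) and invoking Poincar\'e--Bendixson to force every bounded forward orbit onto an equilibrium; that is a genuine improvement over the paper's wording. Conversely, the paper's separation step is more concrete than yours: it tracks the two backward branches of $W^s(\tilde{x}_R,\tilde{q}_R)$ explicitly (inside $D_\rho$ the branch with $q_0+f(x_0)>0$ has $x$ decreasing and $q$ increasing monotonically until $x$ reaches $x_L$, after which $\dot q$ changes sign and the branch limits onto the invariant half-line $\{x<x_L,\ q=0\}$), and this explicit tracking is what certifies the proper embedding that you yourself identify as the main obstacle. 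Two small repairs: your backward-escape lemma should except the \emph{unstable} manifolds of the saddles and the equilibria (as in Lemma \ref{conv}), not the stable manifolds --- as written it excludes exactly the orbits you later apply it to; and $\frac{d}{dt}\log|q|>0$ outside $D_\rho$ only gives monotone decay of $|q|$ in backward time, so to conclude $q\to0$ you need $\rho+f'(x)$ bounded away from zero along the backward orbit, which should be extracted from (A4)' (the paper's sign-tracking sidesteps this). Neither issue affects the architecture of the argument.
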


\begin{figure}[tp]\label{fig3}
\begin{center}
\includegraphics[bb=0 0 260 140, width=13cm]{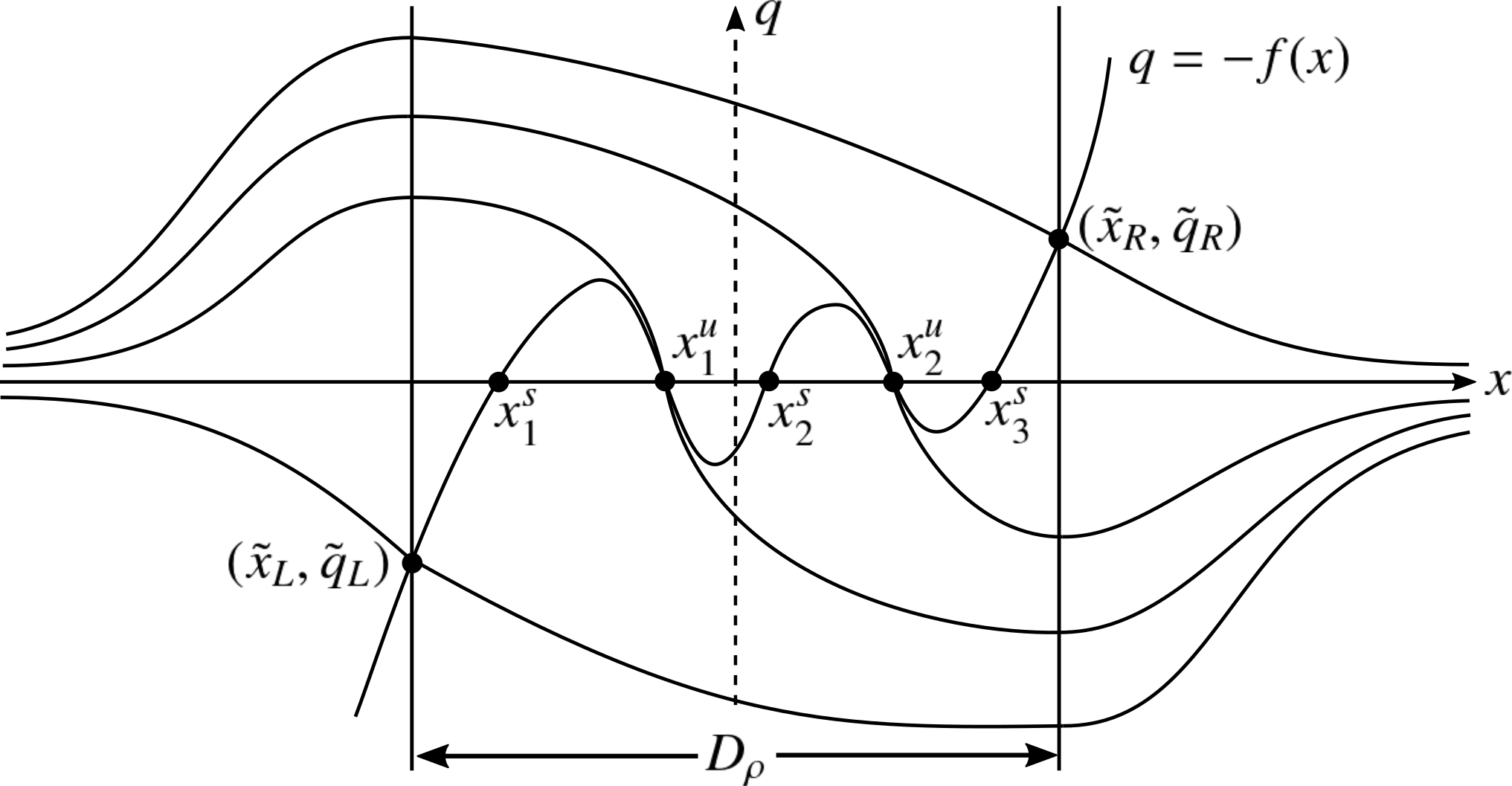}
\caption{The illustration for one-dimensional control problems. The four stable manifolds of each saddle point, $(x_1^u,0), (x_2^u,0), (\tilde{x}_L,\tilde{q}_L)$ and $(\tilde{x}_R,\tilde{q}_R)$, separate $\mathbb{R}^2$.}
\end{center}
\end{figure}

\begin{proof}

Since $D_\rho$ is an open interval set, denote $D_\rho=(x_L,x_R)$. First we show that $W^s(\tilde{x},\tilde{q})$ with $\tilde{x}=x_R$ separates $\mathbb{R}^2$ to two connected sets and is their boundary. Since $(\tilde{x},\tilde{q})$ is a saddle, the neighbor of the fixed point contains sets of initial points from which the orbit converges to $(\tilde{x},\tilde{q})$ starting. If the initial point $(x_0,q_0)$ in the neighbor satisfies $q_0+f(x_0)>0$, then $(x_0,q_0)\in D_\rho$ and $\dot{q}>0$ which mean $x(t)$ decreases and $q(t)$ increases monotonically as $t\to-\infty$ until $x(t)$ reaches $x_L$. After that, $\dot{q}$ becomes negative, so that, $q(t)\to0$ and $x(t)\to-\infty$ as $t\to-\infty$ since the line $\{(x,q):\ x<x_L, q=0\}$ becomes a part of separatrixes. If the initial point $(x_0,q_0)$ in the neighbor satisfies $q_0+f(x_0)<0$, then $q(t)\to0$ and $x(t)\to\infty$ as $t\to-\infty$ by similar arguments. Therefore, the set $W^s(\tilde{x},\tilde{q})$ separates $\mathbb{R}^2$ to two connected sets. (More rigorously, we may need the theorem 4.6 in \cite{Hirsch} or discussions of one-point compactification and Jordan-Brouwer separation theorem.)

By the same arguments, we can see that the all stable manifolds $W^s(\tilde{x},\tilde{q})$ for $\tilde{x}=x_L$ and $W^s(x_i^s,0)$ for $i=1,\cdots,N+1$ also separate $\mathbb{R}^2$. Moreover, since they have no intersection each other, the stable manifolds $W^s(\tilde{x}_1,\tilde{q}_1)$, $W^s(\tilde{x}_2,\tilde{q}_2)$ and $W^s(x_i^s,0)$ separate $\mathbb{R}^2$ to $N+2$ regions $\{A_i\}_{i=0}^{N+1}$. We can obviously name $\{A_i\}_{i=0}^{N+1}$ satisfy (i)-(iii) of the theorem.

Finally, each set $A_i$, $i=1,\cdots,N$, has only unique stable fixed points $x_i^s$ in the inside, and the orbit cannot move to other $A_j$  since all stable manifolds are separatrixes. The orbit clearly does not diverge and there is no non-trivial attractor in each $A_i$ since there is no fixed points except with $(\tilde{x}_1,\tilde{q}_1)$, $(\tilde{x}_2,\tilde{q}_2)$ and $(x_i^s,0)$. Therefore, the orbit $\Phi^y(x_0,q_0)$ with $(x_0,q_0)\in A_i$ must be converge to $(x_i^s,0)$ as $t\to\infty$. For $(x_0,q_0)\in A_0\cup A_{N+1}$, the orbit must diverge since $A_0$ and $A_{N+1}$ have no attractor in its inside. The proof is completed.

\end{proof}

\section*{Acknowledgement}
I am deeply grateful to Prof. Ichiro Tsuda (Chubu university) and Prof. Hideo Kubo (Hokkaido university) for constructive comments and warm encouragement. Moreover, I would like to thank Prof. Atsuro Sannami (Kitami institute of Technology), Prof. Okihiro Sawada (Kitami institute of Technology) and Prof. Tomoo Yokoyama (Kyoto university of Education) to give me insightful comments and suggestion. Finally, this work is supported by JST CREST Grant Number JPMJCR17A4, Japan.


\end{document}